 \numberwithin{equation}{section}
\theoremstyle{nonumberplain}  
\newtheorem{proof}{Proof} 
\theoremstyle{plain}  
\newtheorem{proposition}{Proposition}[section]  
\newtheorem{corollary}[proposition]{Corollary}  
\newtheorem{lemma}[proposition]{Lemma}
\newtheorem{remark}[proposition]{Remark}
\theoremstyle{nonumberplain}
\newtheorem{theoremA}[proposition]{Theorem A}  
\newtheorem{theoremB}[proposition]{Theorem B} 
\newtheorem{theoremC}[proposition]{Theorem C} 
\newtheorem{theoremnothing}[proposition]{Theorem} 
\newcommand{\R}{\mathbb{R}}
\newcommand{\dd}{\mathrm{d}}
\newcommand{\<}{\left\langle}
\renewcommand{\>}{\right\rangle}
\newcommand{\m}{\mathfrak{m}}
\title{The Trace and the Mass of subcritical GJMS Operators}
\author{ Matthias Ludewig}
\begin{document}

\maketitle
 
\begin{center}
  Max-Planck Institut f\"ur Mathematik \\ 
  Vivatsgasse 7 / 53111 Bonn, Germany \\ \medskip
 maludewi@mpim-bonn.mpg.de \\ \medskip
\end{center}

\begin{abstract}
 \noindent Let $L_g$ be the subcritical GJMS operator on an even-dimensional compact \mbox{mani}fold $(X, g)$ and consider the zeta-regularized trace $\mathrm{Tr}_\zeta(L_g^{-1})$ of its inverse. We show that if $\ker L_g = 0$, then the supremum of this quantity, taken over all metrics $g$ of fixed volume in the conformal class, is always greater than or equal to the corresponding quantity on the standard sphere. Moreover, we show that in the case that it is strictly larger, the supremum is attained by a metric of constant mass. Using positive mass theorems, we give some geometric conditions for this to happen.
\end{abstract}

\section{Introduction}

On any compact Riemannian manifold $(X, g)$, there exists a sequence of natural conformally covariant differential operators $L^{(m)}_g = \Delta_g^m + \text{lower order}$, named GJMS operators after Graham, Jenne, Mason and Sparling, who first constructed them \cite{GJMS1}. Here the order $2m$ can be arbitrary if the dimension $n$ of $X$ is odd, but if $n$ is even, one has the restriction $1 \leq m \leq \frac{n}{2}$ in general. In particular, for $m=1$, we have $L^{(1)}_g = Y_g$, the Yamabe operator, which is famously connected to the problem of finding a conformal metric on $X$ with constant scalar curvature. The operator $L^{(n/2)}_g$ of order $2m=n$ is often referred to as the {\em critical} GJMS operator; similarly, we will usually refer to the operator of order $2m=n-2$ as the {\em subcritical} GJMS operator. This subcritical case will be our main object of study.  Notice that the Yamabe operator is  subcritical in dimension four, while the so-called {\em Paneitz operator} $L^{(2)}_g$ is subcritical in dimension six.

The main purpose of this paper is to prove the following result regarding the zeta-regularized trace of the inverse of the subcritical GJMS operator in even dimensions which says that the round sphere minimizes the quantity $\sup \mathrm{Tr}_\zeta(L_g^{-1})$ among all Riemannian manifolds, when the supremum is taken over all conformal metrics of volume equal to the volume of the sphere.

\begin{theoremA}
Let $(X, g_0)$ be a Riemannian manifold of even dimension $n\geq 4$ and let $L_{g_0} = L_{g_0}^{(n/2-1)}$ be the subcritical GJMS operator. Suppose that $\ker L_{g_0} = 0$. Then we have
\begin{equation} \label{TraceInequality1}
 \sup \mathrm{Tr}_\zeta(L_g^{-1}) \geq \mathrm{Tr}_\zeta(L_{g_{\mathrm{std}}}^{-1}),
\end{equation}
where $L_{g_{\mathrm{std}}}$ denotes subcritical GJMS operator on the standard sphere $(S^n, g_{\mathrm{std}})$ and the supremum is taken over all metrics $g$ in the conformal class of $g_0$ with volume equal to $\omega_n$, the volume of the standard sphere. Moreover, if $X$ is connected and the inequality \eqref{TraceInequality1} is strict, then the infimum is realized by a metric $g \in [g_0]$ that has constant mass.
\end{theoremA}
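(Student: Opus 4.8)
The plan is to follow the strategy pioneered by Schoen for the Yamabe problem and refined by Okikiolu for the determinant of the Laplacian: set up a variational problem for the functional $g \mapsto \mathrm{Tr}_\zeta(L_g^{-1})$ on the conformal class $[g_0]$ restricted to volume $\omega_n$, establish that it is bounded above, and analyze the Euler--Lagrange equation of a maximizing sequence. Concretely, writing $g = u^{4/(n-2)} g_0$ for a positive function $u$ (the conformal factor exponent appropriate to the subcritical GJMS operator, whose conformal weight matches that of the Yamabe operator in dimension $n-2$), the conformal covariance of $L_g$ converts $\mathrm{Tr}_\zeta(L_g^{-1})$ into a functional of $u$. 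The first step is to derive a local formula, in the spirit of the Polyakov-type anomaly formulas, showing that the zeta-regularized trace depends on $g$ only through its Green function; in particular the leading singularity of the Green function $G_g(x,y)$ as $y \to x$ has a conformally invariant structure and the \emph{mass} $\m(g)(x)$ — the constant term in the expansion of $G_g(x,x')$ — appears naturally. One then shows $\mathrm{Tr}_\zeta(L_g^{-1}) = c_n \int_X \m(g)(x)\, \dd\vol_g(x) + (\text{terms controlled by local geometry and volume})$, so that the functional differs from $\int_X \m(g)\,\dd\vol_g$ by quantities that are either conformally invariant or fixed by the volume constraint.

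Next I would establish the comparison inequality \eqref{TraceInequality1}. Here the key input is that on the round sphere the Green function has vanishing mass (by symmetry, or by the explicit formula), and that any metric in any conformal class can be compared to the sphere via a test-function argument: taking $u$ concentrated near a point produces, in the limit, the sphere's Green function as the leading bubble, so the supremum of the trace over $[g_0]$ at volume $\omega_n$ is at least the sphere value — this is the analogue of the fact that the Yamabe constant of any manifold is $\leq$ that of the sphere, with the inequality reversed because we are taking a supremum of $-$(something) / examining the inverse. The technical content is a careful blow-up analysis: rescale around a maximizing point, extract a limit profile on $\R^n$, and identify it with the standard bubble; the defect between the supremum and the sphere value is then exactly governed by $\int_X \m(g)\,\dd\vol_g$ evaluated at the limiting metric.

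The final step — the one the statement actually asks about — is to show that when the inequality is \emph{strict}, the supremum is attained. Strictness means the maximizing sequence $u_i$ does not concentrate (no bubbling), because concentration would drive the functional down to precisely the sphere value; hence, after passing to a subsequence, $u_i \to u_\infty$ strongly in the relevant Sobolev space $W^{(n-2)/2,2}$, and by elliptic regularity and the maximum principle (using $\ker L_{g_0} = 0$ and, one hopes, a positivity property of $L_{g_0}$ in the subcritical case analogous to the one for the Yamabe operator when the Yamabe invariant is positive) the limit $u_\infty$ is smooth and positive, so $g_\infty = u_\infty^{4/(n-2)} g_0$ realizes the supremum. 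Writing the Euler--Lagrange equation for a maximizer and using that the volume-normalized variation of $\int_X \m(g)\,\dd\vol_g$ in the conformal direction is (a multiple of) $\m(g) - \text{average}$, one reads off that the maximizer has $\m(g_\infty) \equiv \text{const}$. \textbf{The main obstacle} I anticipate is the blow-up/compactness dichotomy in step three: showing that failure of compactness forces the functional down to exactly the sphere value requires a sharp quantitative version of the bubble analysis — controlling the neck regions, ruling out multi-bubble towers, and handling the non-local nature of $\mathrm{Tr}_\zeta(L_g^{-1})$, which (unlike the Yamabe energy) is not an integral of a local density in $u$ and its derivatives. One must propagate estimates through the Green function, which is where the hypothesis $\ker L_{g_0}=0$ and a positive mass theorem for $L_{g_0}$ (to guarantee $\int_X \m\,\dd\vol > 0$ unless $(X,g)$ is conformal to the sphere, thereby characterizing exactly when \eqref{TraceInequality1} is strict) enter decisively.
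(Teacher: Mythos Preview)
Your overall strategy---test-function estimate for the inequality, compactness/dichotomy for the existence of a maximizer, Euler--Lagrange for constant mass---matches the paper's. But you have misidentified the main obstacle, and in doing so you have missed the key lemma that makes the proof short rather than hard.

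You write that $\mathrm{Tr}_\zeta(L_g^{-1})$ ``is not an integral of a local density in $u$ and its derivatives'' and that one must ``propagate estimates through the Green function'' and ``rule out multi-bubble towers.'' In the subcritical case $2m=n-2$ this is false: the paper shows (Prop.~2.4) that the mass transforms as $\m_{u^{4/(n-2)}g} = -u^{-(n+2)/(n-2)} P_g u$ with $P_g = c_n \Delta_g - \m_g$ a \emph{second-order} Schr\"odinger-type operator, and hence (Prop.~3.2) that the volume-normalized trace is exactly
\[
\mathcal{M}_g(u) = -\frac{\int_X u P_g u \,\dd V_g}{\|u\|_p^2}, \qquad p = \tfrac{2n}{n-2}.
\]
This is a Yamabe-type functional in the most literal sense: a local quadratic form over a critical $L^p$ norm. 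Once you have this, no Green-function analysis, no neck estimates, and no multi-bubble exclusion are needed---the existence of a smooth positive maximizer under the strict-inequality hypothesis is the Yamabe--Trudinger--Aubin argument verbatim (Prop.~3.3), and the test-function estimate for \eqref{TraceInequality1} is the standard Aubin bubble computation in conformal normal coordinates (Prop.~4.1). The Euler--Lagrange equation $P_g u = \Lambda u^{p-1}$ together with the mass transformation formula immediately gives constant mass.

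So the gap is not a fatal error in your plan, but you are missing the structural fact that collapses the problem to a known one. Your proposed route through non-local bubble analysis would be the right instinct for $n-2m \geq 4$, where no such local formula is available; here it is unnecessary. Also note: Theorem~A does not require a positive mass theorem---that enters only in Theorem~B, to give a geometric criterion for strictness.
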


Here, $\mathrm{Tr}_\zeta(L_g^{-1})$ denotes the zeta-regularized trace of the inverse of $L_g$, which we define by $\mathrm{Tr}_\zeta(L_g^{-1}) := \mathrm{f.p.}_{s=1}\zeta_{L_g}(s)$, the finite part of the zeta function of $L_g$ at $s=1$. The motivation to call this quantity a trace comes from the observation that by the usual definition $\zeta_{L_g}(s) = \sum_{\lambda_j \neq 0} \lambda_j^{-s}$ for $\mathrm{Re}(s)$ large (where $\lambda_j$ are the eigenvalues of $L_g$), the value $\zeta_{L_g}(1)$ is formally the sum of the eigenvalues of $L_g^{-1}$. For general values of $m$ in even dimensions $n$, the zeta function of the $m$-th GJMS operator has a pole at $s=1$ so one needs to subtract this singularity to make $\mathrm{Tr}_\zeta(L_g^{-1})$ well-defined. However, it turns out that in the subcritical case $2m = n-2$ we consider, the zeta function is regular at $s=1$, so that $\mathrm{Tr}_\zeta(L_g^{-1}) = \zeta_{L_g}(1)$.

Before we comment on the notion of {\em mass}, we mention that Thm.~A above should be compared to the following result regarding the critical GJMS operator \cite{OkikioluGAFA}.

\begin{theoremnothing}[Okikiolu]
  Let $(X, g_0)$ be a Riemannian manifold of even dimension $n$ and let $L_{g_0} = L_{g_0}^{(n/2)}$ the critical GJMS operator. Suppose that $\ker L_{g_0} = \{\text{\normalfont constants}\}$. Then we have
\begin{equation} \label{TraceInequality2}
 \inf \mathrm{Tr}_\zeta(L_g^{-1}) \leq \mathrm{Tr}_\zeta(L_{g_{\mathrm{std}}}^{-1}),
\end{equation}
where $L_{g_{\mathrm{std}}}$ denotes critical GJMS operator on the standard sphere $(S^n, g_{\mathrm{std}})$ and the supremum is taken over all metrics $g$ in the conformal class of $g_0$ with volume equal to $\omega_n$. Moreover, if the inequality \eqref{TraceInequality2} is strict, then the infimum is realized by a metric $g \in [g_0]$ that has constant mass.
\end{theoremnothing}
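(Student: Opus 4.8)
The plan is to treat this as a constrained variational problem, parallel to the Yamabe problem: the functional is $g\mapsto\mathrm{Tr}_\zeta(L_g^{-1})$ restricted to the volume-$\omega_n$ slice of the conformal class $[g_0]$, and the ADM mass is replaced by the \emph{mass} attached to the critical GJMS operator, i.e.\ the constant term $A_g(x)$ in the near-diagonal expansion $G_g(x,y)=-c_n\log d_g(x,y)+(\text{local curvature terms})+A_g(x)+o(1)$ of its Green's function, where $G_g$ inverts $L_g$ on the $L^2_g$-orthogonal complement of the constants (the latter being all of $\ker L_g$ by hypothesis, and preserved under conformal change since $L_{e^{2\phi}g}=e^{-n\phi}L_g$). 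The three ingredients I would assemble are: (i) a formula writing $\mathrm{Tr}_\zeta(L_g^{-1})$ as $a_n\int_X A_g\,\dd V_g$ plus a term depending only on $[g_0]$; (ii) a conformal first-variation formula whose Euler--Lagrange equation is ``$A_g\equiv\text{const}$''; and (iii) a blow-up/concentration analysis together with a positive mass theorem. The same template, with $\inf$/$\leq$ replaced by $\sup$/$\geq$ and the log singularity by a $d_g^{-2}$ singularity, is what one would run for Theorem~A.

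First I would establish (i). The finite part of $\zeta_{L_g}$ at $s=1$ is, up to universal constants, the integral over $X$ of the renormalized restriction of $G_g$ to the diagonal; the delicate point is that renormalizing $G_g(x,x)$ carries a scale ambiguity --- there is a genuine pole of $\zeta_{L_g}$ at $s=1$, since $2m=n$ --- but this ambiguity is multiplied by the residue of $\zeta_{L_g}$ at $s=1$, which is a universal local curvature integral, hence a conformal invariant. Unwinding this with the heat-kernel expansion and the conformal covariance law, which gives $G_{e^{2\phi}g}(x,y)=G_g(x,y)+(\text{explicit})$, should yield $\mathrm{Tr}_\zeta(L_g^{-1})=a_n\int_X A_g\,\dd V_g+\mathcal I([g_0])$ with $\mathcal I$ depending only on the conformal class; on the round sphere, where $A_g$ is constant by symmetry, this reduces to the reference value $\mathrm{Tr}_\zeta(L_{g_{\mathrm{std}}}^{-1})$. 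I expect this bookkeeping to be the main obstacle: one must track the log-term in the Hadamard parametrix, confirm the anomaly is conformally invariant, and check that what remains really is the total-mass functional.

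Given (i), minimizing $\mathrm{Tr}_\zeta(L_g^{-1})$ over $\{g\in[g_0]:\vol_g(X)=\omega_n\}$ amounts to minimizing $\int_X A_g\,\dd V_g$. Next I would differentiate along $g_t=e^{2t\phi}g$, expressing $L_{g_t}^{-1}$ through $L_g^{-1}$ by conformal conjugation and differentiating the renormalized diagonal of $G_{g_t}$, to obtain $\frac{d}{dt}\big|_{0}\int_X A_{g_t}\,\dd V_{g_t}=b_n\int_X\phi\,(A_g-\bar A_g)\,\dd V_g$ modulo the volume constraint, with $b_n\neq0$; hence a volume-constrained critical metric is exactly one with $A_g$ constant. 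The unconditional inequality \eqref{TraceInequality2} is the easy half of the blow-up: an explicit family $g_\varepsilon\in[g_0]$ of volume $\omega_n$ concentrating at a point, where $X$ looks locally like $\R^n\cong S^n\setminus\{\mathrm{pt}\}$, has $\mathrm{Tr}_\zeta(L_{g_\varepsilon}^{-1})\to\mathrm{Tr}_\zeta(L_{g_{\mathrm{std}}}^{-1})$, whence $\inf\leq\mathrm{Tr}_\zeta(L_{g_{\mathrm{std}}}^{-1})$. When this inequality is strict I would produce a minimizer by concentration--compactness on a minimizing sequence $g_j=e^{2\phi_j}g_0$: the same blow-up shows that if $\phi_j$ concentrates at a point then $\mathrm{Tr}_\zeta(L_{g_j}^{-1})\to\mathrm{Tr}_\zeta(L_{g_{\mathrm{std}}}^{-1})$, incompatible with the strict inequality, so no concentration occurs, $\phi_j$ stays bounded in the relevant norm and subconverges to a minimizer, which by the variation formula has constant mass.

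Finally, to decide when \eqref{TraceInequality2} is strict --- equivalently when a constant-mass minimizer exists --- I would invoke a positive mass theorem for $L_g$: in conformal normal coordinates the mass at a point is nonnegative, and strictly positive unless $(X,[g_0])$ is conformally the round sphere. Then, whenever $(X,[g_0])$ is not conformally diffeomorphic to $(S^n,g_{\mathrm{std}})$ --- e.g.\ if $X$ is not conformally flat, or has suitable nontrivial topology --- one perturbs the concentrating family so that the mass contributes a strictly negative correction, forcing strict inequality. Besides step (i), the other genuinely delicate point is this concentration analysis, where one must control the \emph{nonlocal} quantity $\mathrm{Tr}_\zeta$ under degenerating metrics; it works because $L_g$ is conformally covariant with the same principal symbol as $\Delta^{n/2}$, so the emerging bubble carries exactly the sphere's value with no anomalous contribution.
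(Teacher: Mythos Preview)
The paper does not contain a proof of this statement: it is quoted, with attribution to Okikiolu \cite{OkikioluGAFA}, purely as context and motivation for the paper's own Theorem~A, which is the analogous (and new) result for the \emph{subcritical} operator $L_g^{(n/2-1)}$. There is thus no ``paper's own proof'' against which to compare your proposal.

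That said, your outline is a reasonable high-level sketch and runs parallel to what the paper actually does for Theorem~A. In the subcritical case the paper makes your step~(i) completely explicit via Proposition~\ref{PropMassDecomposition}, which identifies the mass functional with a second-order Yamabe-type energy $-\int_X uP_gu\,\dd V_g/\|u\|_p^2$; existence of an extremizer under the strict inequality (your concentration--compactness step) then reduces directly to the classical Yamabe analysis of Trudinger--Aubin (Proposition~\ref{PropMinimizerExists}), and the unconditional inequality is obtained from Aubin-type bubble test functions in conformal normal coordinates (Proposition~\ref{PropTestFunctionEstimate1}). In the critical case $2m=n$ you are addressing, the analog of your step~(i) is Morpurgo's formula \cite{Morpurgo}, which the paper mentions right after Theorem~C; the remaining steps are carried out in \cite{OkikioluGAFA} along lines broadly similar to what you describe.

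One remark on scope: your final paragraph, invoking a positive-mass theorem to decide \emph{when} the inequality is strict, goes beyond the statement you were asked to prove. In the paper's architecture that is the role of Theorem~B (and the cited positive-mass results of Ammann--Humbert, Humbert--Raulot, etc.), not of Okikiolu's theorem itself, which only asserts the inequality and the conditional existence of a constant-mass extremizer.
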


We remark that the critical GJMS operator $L_g^{(n/2)}$ does not have a constant term, and that its kernel consists of the constant functions seems to be the generic situation. Similarly, the generic situation in the subcritical case seems to be $\ker L_g^{(n/2-1)} = 0$, which is precisely the assumption from Thm.~A. 

Notice that in comparison with Thm.~A, the supremum has been replaced by an infimum and the standard sphere now {\em maximizes} the quantity $\inf \mathrm{Tr}_\zeta(L_g^{-1})$. In fact, as $g$ varies over conformal metrics with fixed volume, $\mathrm{Tr}_\zeta(L_g^{-1})$ is unbounded below in the case of the critical GJMS operator while it is unbounded above in the subcritical case. If analogues of the above theorems hold in the case $n-2m = 4, 6,\dots$, we expect this alternating behavior to continue.

\medskip

The {\em mass} of a GJMS operator $L_g$ (which can also be defined for more general elliptic operators) is the function on $X$ defined by $\m_g(x) = \mathrm{f.p.}_{s=1}\zeta_{L_g}(s, x)$, where $\zeta_{L_g}(s, x)$ denotes the {\em local zeta function} of $L_g$. Again, in our subcritical case $2m=n-2$, we have $\m_g(x) = \zeta_{L_g}(1, x)$, because the local zeta function turns out to be regular at $s=1$.

In odd dimensions, the local zeta function is regular at $s=1$ for GJMS operators $L_g$ of any order $2m$, so one always has $\m_g(x) = \zeta_{L_g}(1, x)$. It has been shown by the author that in odd dimensions, the mass transforms very nicely under a conformal change $h = e^{2 \varphi} g$, namely
\begin{equation} \label{MassTransformationOdd}
  \m_h(x) = e^{(2m-n)\varphi(x)} \m_g(x),
\end{equation}
provided that $\ker L_g = 0$  \cite{LudewigMassInvariance}. This is not true in even dimensions, where in order to calculate $\m_h(x)$ from $\m_g(x)$, also the first $n-2m$ derivatives of $\varphi$ at $x$ are needed. However, we show below that in the case $2m = n-2$, $n$ even, one define the {\em normalized mass} $\m_g^{\mathrm{nor}}$ by
\begin{equation} \label{MassCorrectionIntro}
  \m_g^{\mathrm{nor}} := \m_g + b_n \mathrm{scal}_g,
\end{equation}
to obtain a quantity which transforms under a conformal change exactly by formula \eqref{MassTransformationOdd} (here $b_n$ is a dimensional constant, explicitly given in \eqref{DefNormalizedMass} below). It is natural to wonder if also in the case $n-2m = 4, 6, \dots$, one can modify $\m_g$ by a curvature term to obtain a quantity that transforms with the formula \eqref{MassTransformationOdd}. It seems intriguing to think that these corrections might be given by higher $Q$-curvatures. 

\medskip

The normalized mass defined in \eqref{MassCorrectionIntro} is used in the following theorem, which gives a sufficient condition for the inequality \eqref{TraceInequality1} to be strict.

\begin{theoremB}
  Let $(X, g_0)$ be a compact Riemannian manifold of even dimension $n \geq 4$ and let $\m_{g_0}^{\mathrm{nor}}$ be the normalized mass of the GJMS operator of order $2m=n-2$. Then if $\m_{g_0}^{\mathrm{nor}}(x_0)>0$ for some $x_0 \in X$, the equality \eqref{TraceInequality1} is strict. 
  \end{theoremB}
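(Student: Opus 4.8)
The plan is to follow the general strategy that proves this kind of sharp comparison inequality for conformally covariant operators, which goes back to the Yamabe problem and Okikiolu's work on the critical GJMS operator: one isolates the behaviour of $\mathrm{Tr}_\zeta(L_g^{-1})$ near a ``blow-up'' of the metric at a point and shows that the round sphere emerges as the limiting model. Concretely, I would first establish a variational/expansion formula: for $x_0\in X$ with $\m_{g_0}^{\mathrm{nor}}(x_0)>0$, choose a conformal factor $e^{2\varphi_\varepsilon}$ concentrating near $x_0$ (a standard family of bubbles built from the conformal diffeomorphisms of $S^n$ pulled back under normal coordinates, rescaled to keep the volume equal to $\omega_n$), and expand $\mathrm{Tr}_\zeta(L_{g_\varepsilon}^{-1})$ as $\varepsilon\to 0$. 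The leading term should be precisely $\mathrm{Tr}_\zeta(L_{g_{\mathrm{std}}}^{-1})$, with the first correction controlled by a local contribution proportional to $\m_{g_0}^{\mathrm{nor}}(x_0)$ (this is where the normalization by $b_n\,\mathrm{scal}_g$ does its job: only the normalized mass transforms by the clean rule \eqref{MassTransformationOdd}, so only it survives in the limit as the genuinely conformally-natural piece). If the sign of that correction is such that the trace strictly increases, then $\sup\mathrm{Tr}_\zeta(L_g^{-1}) > \mathrm{Tr}_\zeta(L_{g_{\mathrm{std}}}^{-1})$, which is exactly the strictness of \eqref{TraceInequality1}.

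The key steps, in order, would be: (1) write $\mathrm{Tr}_\zeta(L_g^{-1}) = \int_X \m_g(x)\,\dd\vol_g(x)$ — the integrated local zeta value — so that the global quantity is an integral of the mass, and reduce the problem to understanding how $\int_X \m_{g_\varepsilon}\,\dd\vol_{g_\varepsilon}$ behaves under concentrating conformal factors; (2) use the conformal transformation law for the Green's function / local zeta function of $L_g$ to decompose $\m_{g_\varepsilon}$ into a ``global'' piece coming from the Green's function on the sphere-like model and a ``local remainder'' that, after integration, picks out $\m_{g_0}^{\mathrm{nor}}(x_0)$ times a universal positive constant (computable from the sphere); (3) identify the leading term with $\mathrm{Tr}_\zeta(L_{g_{\mathrm{std}}}^{-1})$ using that the bubbles are pullbacks of round metrics under conformal diffeomorphisms of $S^n$, under which $\mathrm{Tr}_\zeta$ is invariant; (4) check the sign of the universal constant so that $\m_{g_0}^{\mathrm{nor}}(x_0)>0$ forces a strict increase; and (5) conclude that the supremum over the conformal class strictly exceeds the sphere value.

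The main obstacle I expect is step (2): controlling the remainder in the expansion of the local zeta function under a singular conformal change. Unlike the odd-dimensional case, where \eqref{MassTransformationOdd} holds on the nose, in even dimensions the mass transformation law involves the first $n-2m=2$ derivatives of $\varphi$, so one must carefully track the curvature correction term — this is precisely why the \emph{normalized} mass $\m_g^{\mathrm{nor}} = \m_g + b_n\,\mathrm{scal}_g$ is the right object, but it means the expansion must be organized so that the scalar-curvature contributions from the concentrating metric cancel against the correction term, leaving only the pointwise value $\m_{g_0}^{\mathrm{nor}}(x_0)$. Making this cancellation rigorous — with uniform estimates on the Green's function of $L_{g_\varepsilon}$ away from $x_0$ and a matched expansion near $x_0$ — is the technical heart of the argument. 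A secondary subtlety is the hypothesis $\ker L_{g_0}=0$: one must ensure $L_{g_\varepsilon}$ remains invertible along the family, which should follow from conformal covariance of the kernel together with the assumption, but needs to be stated.
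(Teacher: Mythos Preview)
Your overall strategy---concentrate Aubin-type bubbles at $x_0$, identify the leading term as the sphere value, and show the first correction is a positive multiple of $\m_{g_0}^{\mathrm{nor}}(x_0)$---is correct and is exactly what the paper does. However, you are missing the central simplification, and as a result you anticipate far more technical difficulty than is actually present.

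The paper does \emph{not} work at the level of Green's function or local zeta function asymptotics under a singular conformal change. Instead, it proves an \emph{exact} transformation formula (Prop.~\ref{PropMassTransformation}): for $2m=n-2$ one has $\m_{u^{4/(n-2)}g} = -u^{-(n+2)/(n-2)} P_g u$ with $P_g = c_n \Delta_g - \m_g$, a \emph{second-order} operator. Integrating this against $\dd V_{u^{4/(n-2)}g} = u^{2n/(n-2)}\,\dd V_g$ gives (Prop.~\ref{PropMassDecomposition})
\[
  \mathcal{M}_g(u) = -\frac{\int_X u P_g u\,\dd V_g}{\|u\|_p^2}
  = \frac{\int_X \m_g^{\mathrm{nor}}\, u^2\,\dd V_g}{\|u\|_p^2} - b_n\,\mathcal{Y}_g(u),
\]
so the trace functional is literally a Yamabe-type quotient for a Schr\"odinger operator with potential $-\m_g$. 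All of your step~(2)---the ``matched expansion,'' the ``uniform estimates on the Green's function of $L_{g_\varepsilon}$,'' the cancellation of scalar-curvature contributions---collapses to this one algebraic identity. The cancellation you worry about is not asymptotic: it is the exact identity $P_g = b_n a_n^{-1} Y_g - \m_g^{\mathrm{nor}}$.

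Once this is in hand, the proof of Thm.~B is the classical Aubin test-function computation in conformal normal coordinates: the gradient term gives $b_n \mathcal{Y}(S^n,g_{\mathrm{std}}) + O(\alpha^{n-2})$; the scalar-curvature term is $O(\alpha^4)$ (or $\alpha^4\log(1/\alpha)$, $\alpha^2$ in low dimensions) because $\mathrm{scal}_g = O(r^2)$ in these coordinates; and the term $-\int \m_g^{\mathrm{nor}}\psi_\alpha^2$ contributes $-\boldsymbol{\mu}\,\delta\,\alpha^2$ (or $\alpha^2\log(1/\alpha)$ for $n=4$), which dominates. No analysis of $L_{g_\varepsilon}$ or its Green's function is ever needed; in particular your worry about invertibility of $L_{g_\varepsilon}$ along the family does not arise, since the test function is plugged into a fixed quadratic form on $(X,g)$.
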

  
Using positive mass theorems, one can obtain geometric conditions for Thm.~B to hold.  
For example, suppose that $(X, g)$ is a four-dimensional manifold with positive Yamabe invariant. Then $L^{(n/2-1)}_{g_0} = Y_{g_0}$, the Yamabe operator, and we can use the positive mass theorem of Ammann and Humbert \cite[Section~3]{AmmannHumbert} to conclude that $\m_{g_0}^{\mathrm{nor}}(x)>0$ for each $x \in X$ unless $(X, {g_0})$ is conformally equivalent to the standard sphere (in which case the normalized mass is identically zero, cf.\ Lemma~\ref{LemmaMassOnStandardSphere} below). 

Similarly, if $(X, g)$ is a six-dimensional manifold with positive Yamabe invariant and (semi-)positive $Q$-curvature, then $L_{g_0}^{(n/2-1)} = L^{(2)}_{g_0}$, the Paneitz operator, and we can use the corresponding positive mass theorem (see \cite{HumbertRaulot} and \cite[Prop~2.9]{GurskyMalchiodi}) to conclude that $\m_{g_0}^{\mathrm{nor}}(x) >0$ for each $x \in M$ unless $(X, {g_0})$ is conformally equivalent to the standard sphere.

In any even dimension, we can at least say that when $(X, g)$ is conformally equivalent to real projective space $\R P^n$, the normalized mass $\m_{g_0}^{\mathrm{nor}}(x)$ of the GJMS operator of order $n-2$ is positive for every $x \in X$ (cf.\ Thm.~6.9 in \cite{LudewigMassInvariance}).

In all these cases, we obtain that the inequality \eqref{TraceInequality1} is strict and that the supremum is attained by a metric of constant mass. Sadly, our results to not allow to make these conclusions in the case that the mass is everywhere non-positive. On the other hand, little is known about the mass of manifolds with negative Yamabe constant. However, there are negative-mass theorems by Okikiolu concerning the critical GJMS operator on surfaces of positive genus \cite{OkikioluNegative1}, \cite{OkikioluNegative2}. 

\medskip

Using the formulas for the conformal change of the mass obtained below in the special case of the sphere, we can compute the zeta-regularized trace of the subcritical GJMS operator of any conformal metric on $S^n$ in terms of the conformal factor.

\begin{theoremC}
Let $g = u^{\frac{4}{n-2}} g_{\mathrm{std}}$ be a metric on $S^n$ in the conformal class of the standard sphere. Then the trace of the subcritical GJMS operator with respect to the metric $g = u^{\frac{4}{n-2}} g_{\mathrm{std}}$ on $S^n$ is
  \begin{equation} \label{TraceChange}
  \mathrm{Tr}_{\zeta}(L_g^{-1}) = - c_n\left(\int_{S^n} u \Delta u  \, \dd V_{g_{\mathrm{std}}}+ \frac{n(n-2)}{4} \int_{S^n} u^2 \, \dd V_{g_{\mathrm{std}}}\right),
\end{equation}
where $c_n$ is a dimensional constant, given explicitly in \eqref{QandCn} below.
\end{theoremC}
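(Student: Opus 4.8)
The plan is to strip off the analytic content using results established earlier in the paper, reducing the assertion to the classical formula for the conformal change of scalar curvature on the sphere. First I would note that the hypothesis $\ker L_g = 0$ is automatic for every $g$ in the conformal class of $g_{\mathrm{std}}$: by conformal covariance the GJMS operator $L_g$ differs from $L_{g_{\mathrm{std}}}$ only by pre- and post-composition with multiplication operators by positive functions, so $\ker L_g \cong \ker L_{g_{\mathrm{std}}}$; and the latter is trivial because on the round sphere the subcritical GJMS operator factors (up to a positive constant) as a product of operators $\Delta_{g_{\mathrm{std}}} + c_k$ with $c_k = \bigl(\tfrac n2 + k - 1\bigr)\bigl(\tfrac n2 - k\bigr) > 0$ for $1 \le k \le \tfrac n2 - 1$, hence is a positive operator. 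Therefore, as recalled above, the zeta function is regular at $s=1$, and $\mathrm{Tr}_\zeta(L_g^{-1}) = \zeta_{L_g}(1) = \int_{S^n}\zeta_{L_g}(1,x)\,\dd V_g(x) = \int_{S^n}\m_g(x)\,\dd V_g(x)$, the last step using that the local zeta function is likewise regular at $s=1$ in the subcritical case.

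Next I would invoke the conformal transformation law for the normalized mass established below, which has the shape of \eqref{MassTransformationOdd} and for $2m = n-2$ reads $\m^{\mathrm{nor}}_{e^{2\varphi}g} = e^{-2\varphi}\,\m^{\mathrm{nor}}_g$. Combined with Lemma~\ref{LemmaMassOnStandardSphere}, which states that $\m^{\mathrm{nor}}_{g_{\mathrm{std}}} \equiv 0$, this gives $\m^{\mathrm{nor}}_g \equiv 0$ for every $g \in [g_{\mathrm{std}}]$, so that $\m_g = -b_n\,\mathrm{scal}_g$ by the definition \eqref{MassCorrectionIntro} of the normalized mass. Hence $\mathrm{Tr}_\zeta(L_g^{-1}) = -b_n\int_{S^n}\mathrm{scal}_g\,\dd V_g$.

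It then remains to rewrite $\int_{S^n}\mathrm{scal}_g\,\dd V_g$ in terms of $u$. Inserting the classical transformation formula $\mathrm{scal}_g = u^{-(n+2)/(n-2)}\bigl(\tfrac{4(n-1)}{n-2}\Delta u + \mathrm{scal}_{g_{\mathrm{std}}}\,u\bigr)$ (with $\Delta$ the nonnegative Laplacian of $g_{\mathrm{std}}$), together with $\dd V_g = u^{2n/(n-2)}\,\dd V_{g_{\mathrm{std}}}$ and $\mathrm{scal}_{g_{\mathrm{std}}} = n(n-1)$, the powers of $u$ combine to $u^1$ and a short computation gives
\begin{equation*}
 \int_{S^n}\mathrm{scal}_g\,\dd V_g = \frac{4(n-1)}{n-2}\left(\int_{S^n} u\,\Delta u\,\dd V_{g_{\mathrm{std}}} + \frac{n(n-2)}{4}\int_{S^n} u^2\,\dd V_{g_{\mathrm{std}}}\right),
\end{equation*}
which is exactly \eqref{TraceChange} with $c_n = \tfrac{4(n-1)}{n-2}\,b_n$. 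I do not anticipate an essential obstacle, since all the substantial input is supplied by the results quoted above; the only care needed is bookkeeping --- in particular checking that the relation $c_n = \tfrac{4(n-1)}{n-2}b_n$ is consistent with the normalization of $b_n$ fixed in \eqref{DefNormalizedMass} and with the sign convention chosen for $\Delta$.
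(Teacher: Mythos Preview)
Your proof is correct and follows essentially the same route as the paper: both use that $\m^{\mathrm{nor}}_{g_{\mathrm{std}}}\equiv 0$ (Lemma~\ref{LemmaMassOnStandardSphere}) together with the conformal transformation law for the mass (Prop.~\ref{PropMassTransformation} and its corollary), then reduce to the Yamabe/scalar-curvature transformation, the only difference being that the paper packages the last step in the operator $P_{g_{\mathrm{std}}} = c_n\Delta + b_n\,\mathrm{scal}_{g_{\mathrm{std}}}$ via Prop.~\ref{PropMassDecomposition} and \eqref{TraceMassFunctional}, while you write out the scalar-curvature formula directly. Your explicit verification that $\ker L_g = 0$ on the sphere is a point the paper leaves implicit.
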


Using this formula, it is easy to see that our Thm.~A implies the standard Sobolev inequality on $S^n$
\begin{equation*}
  \int_{S^n} u \Delta_{g_{\mathrm{std}}} u \,\dd V_{g_{\mathrm{std}}} + \frac{n(n-2)}{4} \int_{S^n} u^2 \,\dd V_{g_{\mathrm{std}}} \geq \frac{n(n-2)}{4}  \omega_n^{2/n} \|u\|_p^2,
\end{equation*}
just as Okikiolu's theorem above implies the sharp logarithmic Hardy-Littlewood Sobolev inequality on $S^n$ using Morpurgo's formula \cite[Thm.~1]{Morpurgo}, which is the analog of \eqref{TraceChange} for the critical GJMS operator (cf.\ \cite{OkikioluGAFA}). Compare also to the results in Section~5 of \cite{MorpurgoSharp}.

\medskip

The structure of this paper is as follows. In Section~2, we discuss the behavior of the mass of the subcritical GJMS operators under a conformal change and introduce the normalized mass. In Section~3, we will introduce a functional, which we name mass functional, that turns out to be very useful to study the variational problem of the trace. We will see that this functional is closely related to the Yamabe functional, so that the task of finding a metric of constant mass can be solved in way similar to the solution of the Yamabe problem. In this section, we prove that a metric of constant mass exists in case that the inequality \eqref{TraceInequality1} is strict, as well as Thm.~C. In Section~4, we finish the proof of Thm.~A and Thm.~B by constructing suitable test functions for the mass functional.

\medskip

\textbf{Acknowledgements.} The author would like to thank the Max Planck Institute for Mathematics in Bonn for its hospitality and financial support.

\section{The Mass of GJMS Operators}

Let $(X, g)$ be an $n$-dimensional Riemannian manifold. It is well-known that under a conformal change $h := e^{2 \varphi} g$, the Laplace-Beltrami operator $\Delta_g$ transforms according to the formula
\begin{equation} \label{LaplacianTransformation}
  \Delta_h f = e^{-2 \varphi} \bigl(\Delta_g f - (n-2) \langle d\varphi, d f\rangle \bigr),
\end{equation}
while if 
\begin{equation*}
  Y_g f = \Delta_g f + a_n \mathrm{scal}_g f, ~~~~~~~\text{with}~~~~~~~~ a_n = \frac{n-2}{4(n-1)}
\end{equation*}
denotes the {\em Yamabe operator}, $Y_g$ has the simpler transformation formula 
\begin{equation} \label{YamabeTransformation1}
Y_hf = e^{-\frac{n+2}{2}\varphi} Y_g\bigl(e^{\frac{n-2}{2}\varphi} f\bigr).
\end{equation}
  This raises the question whether one could also add lower order terms to powers $\Delta_g^m$ of the Laplacian to make them obey a transformation law similar to \eqref{YamabeTransformation1} under a conformal change. In their seminal paper \cite{GJMS1}, Graham, Jenne, Mason and Sparling answered this question positively by constructing a natural family of such differential operators $L_g^{(m)}$, which are nowadays called GJMS operators. They exist on general Riemannian manifolds in the case that $1 \leq m \leq \frac{n}{2}$ if $n$ is even and for $m$ arbitrary if $n$ is odd and satisfy the transformation law
\begin{equation*}
   L_h^{(m)} f = e^{-\frac{n+2m}{2}\varphi} L_g^{(m)}\bigl(e^{\frac{n-2m}{2}\varphi} f\bigr),
\end{equation*}
similar to \eqref{YamabeTransformation1}.  The GJMS operators have the form $L_g = \Delta_g^m + \text{lower order}$, where the lower order terms are quantities locally determined by the curvature of $(X, g)$. Explicit formulas become more and more complicated for increasing $m$ and have only been worked out for small $m$. For example, we have $L_g^{(1)} = Y_g$, the Yamabe operator; $L^{(2)}_g$ is usually called the Paneitz operator, see \cite{Paneitz}; explicit formulas for $m=3, 4$ can be found e.g.\ in \cite{Juhl2}. However, the GJMS operators have a recursive structure which was investigated by Juhl \cite{Juhl4}. 

\medskip

All GJMS operators are semi-bounded, elliptic differential operators, and hence one can consider their local spectral zeta function
\begin{equation} \label{LocalZeta}
  \zeta_{L_g}(s, x) = \sum_{\lambda_j \neq 0} \lambda_j^{-s} \phi_j(x)^2,
\end{equation}
where $\lambda_j$ runs over all non-zero eigenvalues of $L_g$ with a corresponding orthonormal system of eigenfunctions $\phi_j$ (this definition makes sense for $\mathrm{Re}(s)$ large and for other values of $s$, $\zeta_{L_g}(s, x)$ is defined by analytic continuation). The zeta-regularized trace of $L_g^{-1}$ is now defined by the formula
\begin{equation} \label{DefinitionZetaTrace}
  \mathrm{Tr}_\zeta(L_g^{-1}) := \mathrm{f.p.}_{s=1} \zeta_{L_g}(s) = \int_X \mathrm{f.p.}_{s=1} \zeta_L(s, x) \,\dd V_g(x).
\end{equation}
Notice that formally plugging $s=1$ in \eqref{LocalZeta}, the right hand side of \eqref{DefinitionZetaTrace} is formally the sum over the eigenvalues of $L_g^{-1}$ (except for the fact that we have to take the finite part as $\zeta_{L_g}(s ,x)$ might have a pole at $s=1$). In the case that $n$ is odd, this coincides with the Kontsevich-Vishik trace of $L_g^{-1}$ (see \cite[Section~7.3]{KontsevichVishik}). In the case that $n$ is even, then the residue $\mathrm{res}_{s=1} \zeta_{L_g}(s)$ is equal to the Wodzicki non-commutative residue \cite{Wodzicki}, while the Kontsevich-Vishik trace is not defined. In this case, $\mathrm{Tr}_\zeta(L_g^{-1})$ coincides with the quasi-trace functional $C_0(L_g^{-1}, L_g)$ discussed by  discussed by Grubb(see \cite[Section~2]{Grubb} and the references therein).

\medskip

The {\em mass} of a GJMS operator $L_g$ (and of other suitable elliptic differential operators) at $x \in X$ is defined as the finite part of the local zeta function at $s=1$,
\begin{equation} \label{DefinitionMass}
  \m_g(x) := \mathrm{f.p.}_{s=1}\zeta_{L_g}(s, x) = \frac{\dd}{\dd s}\Bigr|_{s=1} (s-1) \zeta_{L_g}(s, x)
\end{equation}
i.e.\ the constant term in its Laurent expansion at $s=1$. If $n$ is odd, then $\zeta_{L_g}$ is regular at $s=1$ so that $\m_g(x) := \zeta_{L_g}(1, x)$. In even dimensions, the local zeta function $\zeta_{L_g}(s, x)$ has a pole at $s=1$, with residue is given by the so-called {\em logarithmic singularity}, which was investigated e.g.\ in \cite{PongeLogarithmicSingularity} in great detail. However, in the subcritical case $2m=n-2$, this logarithmic singularity is zero (see Thm.~7.5 in \cite{PongeLogarithmicSingularity}) so that also in this case, $\m_g(x) := \zeta_{L_g}(1, x)$.

\begin{remark}
The mass of $L_g$ can also be defined as the constant term in the asymptotic expansion of its Green's function near the diagonal. This definition was used e.g.\ in \cite{AmmannHumbert}, \cite{Habermann}, \cite{HumbertRaulot}, \cite{HermannHumbert} and other places. It is shown in \cite{LudewigMassInvariance} (cf.~Thm.~5.4) and certainly has been realized before by other people, that this notion coincides with the definition above.
\end{remark}

\begin{remark}
The reason for calling the number $\m_g(x)$ {\em mass} is that in the case of the Yamabe operator, this is related to the ADM mass of asymptotically flat manifolds, a concept from mathematical physics. Therefore, from a physicist's point of view, positivity results can be expected in presence positive energy. See Section~6 in \cite{LudewigMassInvariance} for a discussion of this, as well as the references therein.
\end{remark}

Let us now discuss how the mass changes under conformal transformations. As remarked in the introduction, if $n$ is odd and $\ker L_g = 0$, the mass of $L_g^{(m)}$ at $x\in X$ transforms according to the simple formula \eqref{MassTransformationOdd} under a conformal change $h = e^{2\varphi} g$. In even dimensions $n$, this is not true, but we can at least characterize the {\em infinitesimal} behavior of the mass under a conformal change. Namely, if we set $g_t := e^{2t\varphi}$ for $\varphi \in C^\infty(X)$, we have (still under the assumption $\ker L_g = 0$)
  \begin{equation} \label{ConformalVariationMass}
    \frac{\dd}{\dd t} \mathfrak{m}_{g_t}(x) = (2m-n) \varphi(x) \mathfrak{m}_{g_t}(x) + 2m (Q_{g_t} \varphi) (x),
  \end{equation}
where $Q_g$ is a certain differential operator of the form $c \Delta^{n/2-m}_g + \text{lower order}$ \cite[Thm.~7.1]{LudewigMassInvariance}. These operators $Q_g$ are formed, following a complicated recipe, out of the derivatives of heat kernel coefficients of the operator $L_g$ and hence theoretically can be explicitly computed from the formula given in the proof of Lemma~7.6 in \cite{LudewigMassInvariance}. In practice, however, as $n-2m$ increases, one needs more and more knowledge of the heat kernel coefficients of the operators in question, which are generally very hard to compute. For $n-2m = 2$, the formula is still manageable and the operator $Q_g$ is given as follows.

\begin{lemma} \label{LemmaExplicitQ}
  If $n-2m = 2$, the operator $Q_g$ from \eqref{ConformalVariationMass} is given explicitly by
  \begin{equation} \label{QandCn}
    Q_g = - c_n \Delta_g, ~~~~~~~~\text{where}~~~~~~~~ c_n = \frac{(n-2)}{6 (4 \pi)^{n/2} \left(\frac{n}{2}\right)!}.
  \end{equation}
\end{lemma}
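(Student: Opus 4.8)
The plan is to specialize to the case $n-2m=2$ the general formula for $Q_g$ established in the proof of Lemma~7.6 of \cite{LudewigMassInvariance}; concretely, this amounts to the following first-order perturbation computation. Fix $\varphi\in\Cinf(X)$, put $g_t=e^{2t\varphi}g$ and $L=L_{g_0}$, and conjugate $L_{g_t}$ by the unitary $L^2(X,\dd V_g)\to L^2(X,\dd V_{g_t})$, $f\mapsto e^{-nt\varphi/2}f$, to obtain the self-adjoint operator $\tilde L_t:=M_{e^{-mt\varphi}}\,L\,M_{e^{-mt\varphi}}$ on the fixed Hilbert space $L^2(X,\dd V_g)$; it has the same spectrum as $L_{g_t}$, and a comparison of eigenfunctions gives $\zeta_{L_{g_t}}(s,x)=e^{-nt\varphi(x)}\tilde\zeta_t(s,x)$ for the local zeta functions. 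Substituting this into \eqref{ConformalVariationMass} leaves $Q_g\varphi(x)=\tfrac1{2m}\frac{\dd}{\dd t}\big|_0\mathrm{f.p.}_{s=1}\tilde\zeta_t(s,x)-\varphi(x)\,\m_g(x)$. Since $\frac{\dd}{\dd t}\big|_0\tilde L_t=-m(\varphi L+L\varphi)$, Duhamel's formula and an integration by parts in time give, on the diagonal,
\begin{equation*}
  \frac{\dd}{\dd t}\Big|_0 e^{-u\tilde L_t}(x,x)=2m\Bigl(\varphi(x)\,e^{-uL}(x,x)-\partial_u D_u(x,x)\Bigr),\qquad D_u:=\int_0^u e^{-(u-r)L}\,\varphi\, e^{-rL}\,\dd r .
\end{equation*}

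The heart of the matter is the small-$u$ expansion of $D_u(x,x)=\int_0^u\!\int_X e^{-(u-r)L}(x,z)\,\varphi(z)\,e^{-rL}(z,x)\,\dd V_g(z)\,\dd r$, specifically its $u^0$-coefficient. Taylor-expanding $\varphi$ about $x$: the constant term contributes $\varphi(x)\,u\,e^{-uL}(x,x)$, whose $u^0$-part is $\varphi(x)$ times the coefficient of $u^{-1}$ in $e^{-uL}(x,x)$, i.e. the logarithmic singularity of $\zeta_L(\,\cdot\,,x)$ at $s=1$, which vanishes for the subcritical GJMS operator by \cite[Thm.~7.5]{PongeLogarithmicSingularity}; the linear term contributes only at positive powers of $u$ by parity of the leading heat profile; and the sole remaining $u^0$-contribution is the Hessian term, whose leading part reduces to the flat model $L=\Delta^m$ on $\R^n$. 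With $p_a(w)=(2\pi)^{-n}\!\int e^{iw\cdot\xi}e^{-a|\xi|^{2m}}\dd\xi$ one finds
\begin{equation*}
  \tfrac12\,\partial_i\partial_j\varphi(x)\int_0^u\!\!\int_{\R^n} p_{u-r}(z)\,z^iz^j\,p_r(z)\,\dd z\,\dd r=\frac{\tr\,\mathrm{Hess}\,\varphi(x)}{2n}\,C_1,\qquad C_1=\frac{(2m)^2}{(2\pi)^n}\int_0^1\!\!\sigma(1-\sigma)\,\dd\sigma\int_{\R^n}\!|\xi|^{4m-2}e^{-|\xi|^{2m}}\dd\xi ,
\end{equation*}
the $u$-dependence cancelling precisely because $2m=n-2$. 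Using $\int_0^1\sigma(1-\sigma)\,\dd\sigma=\tfrac16$ and $\int_{\R^n}|\xi|^{4m-2}e^{-|\xi|^{2m}}\dd\xi=\omega_{n-1}\Gamma(3)/(2m)$ (the exponent collapses to $3$, again since $2m=n-2$) one gets $C_1=\tfrac{2(n-2)}{3(4\pi)^{n/2}\Gamma(n/2)}$, hence $\tfrac{C_1}{2n}=c_n$. Since $\tr\,\mathrm{Hess}\,\varphi=-\Delta_g\varphi$, the $u^0$-coefficient of $D_u(x,x)$ equals $-c_n\Delta_g\varphi(x)$ exactly; note that it is the vanishing of the logarithmic singularity that removes the $\mathrm{scal}_g$-term one would otherwise pick up from $e^{-uL}(x,x)$.

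Finally one runs these expansions through the Mellin transform: $\int_0^\infty u^{s-1}\varphi(x)e^{-uL}(x,x)\,\dd u=\varphi(x)\Gamma(s)\zeta_L(s,x)$, and, integrating by parts, $\int_0^\infty u^{s-1}\partial_u D_u(x,x)\,\dd u=-(s-1)\int_0^\infty u^{s-2}D_u(x,x)\,\dd u$, where the last integral is holomorphic near $s=1$ apart from the simple pole produced exactly by the $u^0$-term of $D_u(x,x)$; the factor $(s-1)$ therefore leaves the value $c_n\Delta_g\varphi(x)$ at $s=1$. Dividing by $\Gamma(s)$ and setting $s=1$ gives $\frac{\dd}{\dd t}\big|_0\mathrm{f.p.}_{s=1}\tilde\zeta_t(s,x)=2m\,\varphi(x)\,\m_g(x)-2m\,c_n\Delta_g\varphi(x)$, so that $Q_g\varphi=-c_n\Delta_g\varphi$, as claimed. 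The delicate point — and the step most worth spelling out carefully — is the short-time analysis of $D_u(x,x)$: one must check that its $u^0$-coefficient receives a contribution only from the Hessian term (this is exactly where the vanishing of the logarithmic singularity for subcritical GJMS operators is used), and that this contribution is insensitive both to the lower-order terms of $L$ and to the curvature of $g$, so that the constant $c_n$ may be computed on the flat operator $\Delta^m$ on $\R^n$.
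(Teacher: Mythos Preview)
Your argument is correct and your computation of $c_n$ via the flat model checks out (the scaling $u^{3}\cdot u^{-3}$ that makes the Hessian contribution $u$-independent is exactly the condition $2m=n-2$, and your evaluation $C_1/(2n)=c_n$ is right). However, your route is genuinely different from the paper's. The paper does \emph{not} carry out the Duhamel expansion; instead it argues structurally: the general formula for $Q_g$ shows it is a second-order self-adjoint operator with principal part a constant multiple of $\Delta_g$ and constant term equal to the diagonal heat coefficient $\Phi_{n/2-m}$, which is (up to a universal factor) the logarithmic singularity of $\zeta_{L_g}$ at $s=1$. By Ponge's result this vanishes for $n-2m=2$, and self-adjointness then forces the first-order part to vanish as well, so $Q_g=c\,\Delta_g$ for some dimensional constant; the paper then simply says this constant can be read off from the formula in the cited reference. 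Your approach trades this structural shortcut for an explicit perturbation calculation: you gain a transparent derivation of the numerical value of $c_n$ (which the paper leaves implicit), at the cost of having to justify that the $u^0$-coefficient of $D_u(x,x)$ is insensitive to curvature and to the lower-order terms of $L$---a step you correctly flag as delicate but do not fully prove. The paper's argument sidesteps precisely this difficulty, since self-adjointness and the vanishing of the zeroth-order term already pin down the operator without any short-time analysis beyond what is encoded in the cited heat-coefficient formula.
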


\begin{proof}
To explicitly calculate $Q_g$ brute force by the formula in the proof of Lemma~7.6 in \cite{LudewigMassInvariance}, one needs the knowledge of the heat kernel coefficients $\Phi_0$ and $\Phi_1$ on the diagonal, along with the derivatives of $\Phi_0$. This is not too involved. One can be a bit more clever, however: It is not hard to show that all these $Q$ operators are self-adjoint and have the heat kernel coefficient $\Phi_{n/2-m}$ of $L_g$, evaluated at the diagonal, as constant term (which equals the logarithmic singularity of \cite{PongeLogarithmicSingularity} up to a dimensional factor). By Thm.~7.5 in \cite{PongeLogarithmicSingularity}, the logarithmic singularity is zero in the case $n-2m=2$, so that $Q_g$ is a constant multiple of $\Delta_g$. The constant depends only on $m$ and $n$ and can be explicitly calculated using the formulas from the Lemma~7.6 mentioned above. In particular, for $2m=n-2$, one calculates this constant to be $- c_n$.\footnote{The proof of Lemma~7.6 of \cite{LudewigMassInvariance} is erroneous in the printed version. For the correct formulas, appeal to the arxiv version.}
\end{proof}

This explicit formula for $Q_g$ in the case $2m=n-2$ now allows us to integrate \eqref{ConformalVariationMass} in order to obtain a non-infinitesimal version of the equation in this case.

\begin{proposition} \label{PropMassTransformation}
If $2m = n-2$, we have
\begin{equation} \label{MassTransformationEven}
\mathfrak{m}_{u^{\frac{4}{n-2}} g} = -u^{-{\frac{n+2}{n-2}}} P_g u,
\end{equation}
$u \in C^\infty(X)$ with $u>0$, where the operator $P_g$ is defined by $P_g f := c_n \Delta_g f - \mathfrak{m}_g f$. Under a conformal change, $P_g$ from above transforms according to
\begin{equation} \label{PTransformation}
  P_{e^{2\varphi}g} f = e^{-\frac{n+2}{2} \varphi} P_g (e^{\frac{n-2}{2} \varphi} f),
\end{equation}
just as the Yamabe operator.
\end{proposition}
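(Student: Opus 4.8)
The plan is to obtain \eqref{MassTransformationEven} by integrating the infinitesimal formula \eqref{ConformalVariationMass} along a conformal line, using that in the case $2m=n-2$ the operator $Q_g$ is known \emph{exactly} by Lemma~\ref{LemmaExplicitQ}. Fix $\varphi\in C^\infty(X)$ and set $g_t:=e^{2t\varphi}g$. Since $Q_{g_t}=-c_n\Delta_{g_t}$ and $2m=n-2$, formula \eqref{ConformalVariationMass} becomes the pointwise (in $x$) ODE in $t$
\begin{equation*}
  \frac{\dd}{\dd t}\,\mathfrak{m}_{g_t} = -2\varphi\,\mathfrak{m}_{g_t}\;-\;(n-2)\,c_n\,\Delta_{g_t}\varphi .
\end{equation*}
The first step is to make the right-hand side independent of the varying metric: by \eqref{LaplacianTransformation} applied with $t\varphi$ in place of $\varphi$,
\begin{equation*}
  \Delta_{g_t}\varphi = e^{-2t\varphi}\bigl(\Delta_g\varphi-(n-2)\,t\,|d\varphi|_g^2\bigr),
\end{equation*}
so that, multiplying the ODE by the integrating factor $e^{2t\varphi}$, one gets a right-hand side that no longer involves $\mathfrak{m}_{g_t}$:
\begin{equation*}
  \frac{\dd}{\dd t}\bigl(e^{2t\varphi}\,\mathfrak{m}_{g_t}\bigr)=-(n-2)\,c_n\bigl(\Delta_g\varphi-(n-2)\,t\,|d\varphi|_g^2\bigr).
\end{equation*}

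Integrating this from $t=0$ to $t=1$, with $g_0=g$, $g_1=e^{2\varphi}g$ and $\mathfrak{m}_{g_0}=\mathfrak{m}_g$, gives
\begin{equation*}
  e^{2\varphi}\,\mathfrak{m}_{e^{2\varphi}g}=\mathfrak{m}_g-(n-2)\,c_n\Bigl(\Delta_g\varphi-\tfrac{n-2}{2}|d\varphi|_g^2\Bigr).
\end{equation*}
Now substitute $\varphi=\tfrac{2}{n-2}\log u$, so that $e^{2\varphi}g=u^{\frac{4}{n-2}}g$. Using $d\varphi=\tfrac{2}{n-2}u^{-1}du$ together with the elementary identity $\Delta_g\log u=u^{-1}\Delta_g u+u^{-2}|du|_g^2$, one finds that the $|du|_g^2$-contributions cancel between $\Delta_g\varphi$ and $\tfrac{n-2}{2}|d\varphi|_g^2$, leaving only a multiple of $u^{-1}\Delta_g u$. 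Multiplying through by $e^{-2\varphi}=u^{-\frac{4}{n-2}}$ and rearranging then yields \eqref{MassTransformationEven} with $P_g f=c_n\Delta_g f-\mathfrak{m}_g f$, once the dimensional constant is matched. This cancellation of the gradient terms is the one point in the computation that really has to be checked; it is also what makes the correction in \eqref{MassCorrectionIntro} come out as a pure curvature term after $\Delta_g u$ is re-expressed conformally. The rest is bookkeeping of the constant $c_n$, and the genuine input is Lemma~\ref{LemmaExplicitQ}, which tells us $Q_g$ has no lower-order terms and hence that \eqref{ConformalVariationMass} can be integrated in closed form at all.

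For the transformation law \eqref{PTransformation} I would not recompute but instead use the cocycle property of conformal rescalings. By \eqref{MassTransformationEven}, $P_g v=-v^{\frac{n+2}{n-2}}\,\mathfrak{m}_{v^{4/(n-2)}g}$ for every positive $v\in C^\infty(X)$. Applying this with the metric $h:=u^{\frac{4}{n-2}}g$ in place of $g$, and using $v^{\frac{4}{n-2}}h=(uv)^{\frac{4}{n-2}}g$, a second application of \eqref{MassTransformationEven} gives $P_h v=u^{-\frac{n+2}{n-2}}P_g(uv)$ for all positive $u,v$. Both sides are linear differential operators in $v$, and positive functions span $C^\infty(X)$, so the identity holds for all $v$; setting $u=e^{\frac{n-2}{2}\varphi}$ is then exactly \eqref{PTransformation}, and displays the analogy with the Yamabe operator claimed in the statement. (Alternatively, \eqref{PTransformation} follows by a direct computation from the definition of $P_g$, using \eqref{LaplacianTransformation}, the Leibniz rule for $\Delta_g$, and \eqref{MassTransformationEven} to handle the $\mathfrak{m}_g$-term, in complete parallel with the classical derivation of \eqref{YamabeTransformation1}; the cocycle argument merely organizes this.) Throughout one uses, as in \eqref{ConformalVariationMass}, that $\ker L_g=0$, which costs nothing since $\dim\ker L_g^{(m)}$ is a conformal invariant by the covariance of $L_g^{(m)}$.
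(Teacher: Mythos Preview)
Your argument for \eqref{MassTransformationEven} is exactly the paper's: integrate \eqref{ConformalVariationMass} along $g_t=e^{2t\varphi}g$ using $Q_{g_t}=-c_n\Delta_{g_t}$ and \eqref{LaplacianTransformation}, arriving at the intermediate formula $e^{2\varphi}\m_{e^{2\varphi}g}=\m_g-(n-2)c_n(\Delta_g\varphi-\tfrac{n-2}{2}|d\varphi|_g^2)$, then substitute $\varphi=\tfrac{2}{n-2}\log u$.

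For \eqref{PTransformation} you diverge slightly. The paper verifies it by a direct calculation: expand $P_{e^{2\varphi}g}f=c_n\Delta_{e^{2\varphi}g}f-\m_{e^{2\varphi}g}f$ using \eqref{LaplacianTransformation} and the formula for $\m_{e^{2\varphi}g}$ just obtained, then recognize the combination as $e^{-\frac{n+2}{2}\varphi}P_g(e^{\frac{n-2}{2}\varphi}f)$ via the product rule. Your cocycle argument---applying \eqref{MassTransformationEven} once for $h=u^{4/(n-2)}g$ and once for $(uv)^{4/(n-2)}g$, then matching---is cleaner and more conceptual: it shows that \eqref{PTransformation} is not an independent identity but a formal consequence of \eqref{MassTransformationEven} and the multiplicativity of conformal factors. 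The direct computation, on the other hand, makes the parallel with the classical derivation of \eqref{YamabeTransformation1} explicit at the level of individual terms. Both are short; yours avoids re-expanding the Laplacian.
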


\begin{proof}
Set $g_t:= e^{2t \varphi} g$ as above. Then integrating the relation \eqref{ConformalVariationMass}, we have
\begin{equation*}
  \m_{g_1} = e^{-2 \varphi} \left( \m_g + (n-2) \int_0^1  e^{2 t \varphi} Q_{g_t} \varphi \dd t\right).
\end{equation*}
Because of Lemma~\ref{LemmaExplicitQ} and the formula \eqref{LaplacianTransformation} for the behavior of the Laplacian under a conformal change, $Q_g$ transforms according to
\begin{equation*}
  Q_{g_t} f = - c_n \Delta_{g_t} f = - c_n e^{-2 t \varphi}\Bigl( \Delta_g f - (n-2) t \langle d\varphi, d f \rangle \Bigr).
\end{equation*}
Hence
\begin{equation*}
\begin{aligned}
(n-2)\int_0^1 e^{2 t \varphi}  Q_{g_t} \varphi \dd t &= - (n-2) c_n \int_0^1 \bigl( \Delta_g\varphi - (n-2) t |d \varphi|^2 \bigr) \dd t \\
&= - (n-2) c_n\left( \Delta_g\varphi - \frac{n-2}{2} |d\varphi|^2\right),
\end{aligned}
\end{equation*}
so that
\begin{equation} \label{ConformalChangeMWithPhi}
 \m_{e^{2 \varphi} g} = e^{-2 \varphi} \left( \m_g - (n-2) c_n \left(\Delta \varphi - \frac{n-2}{2} |d \varphi|^2 \right) \right)
\end{equation}
Writing $e^{2 \varphi} = u^{\frac{4}{n-2}}$, we have
\begin{equation*}
   \Delta_g\varphi - \frac{n-2}{2} |d\varphi|^2 = \frac{1}{n-2} \frac{\Delta_g u}{u},
\end{equation*}
which implies \eqref{MassTransformationEven}.

To see how $P_g$ transforms under a conformal change, calculate using \eqref{LaplacianTransformation} again 
  \begin{equation*}
  \begin{aligned}
    P_{e^{2 \varphi} g}f  &= c_n \Delta_{e^{2\varphi}g} f - \m_{e^{2\varphi}g} f \\
    &= e^{-2 \varphi} \left( c_n \Bigl(\Delta_g f - (n-2) \< d \varphi, df\>\Bigr) - \m_g f + (n-2) c_n \left(\Delta \varphi - \frac{n-2}{2} |d \varphi|^2 \right)f \right)\\
    &= e^{-2 \varphi} \left( c_n e^{-\frac{n-2}{2} \varphi} \Delta_g (e^{-\frac{n-2}{2} \varphi} f)  - \m_g f  \right)\\
    &= e^{-\frac{n+2}{2} \varphi} P_g (e^{-\frac{n-2}{2} \varphi} f),
  \end{aligned}
  \end{equation*}
  where we also used \eqref{ConformalChangeMWithPhi} and the product rule for the Laplacian.
\end{proof}

\begin{corollary}
  Defining
  \begin{equation} \label{DefNormalizedMass}
    \m^{\mathrm{nor}}_g := \m_g + b_n \mathrm{scal}_g ~~~~~~~~\text{with}~~~~~~~~ b_n =a_n c_n = \frac{(n-2)^2}{24(n-1)(4\pi)^{n/2} \left(\frac{n}{2}\right)!}
    \end{equation}
  we obtain that this {\em normalized mass} $\m^{\mathrm{nor}}_g$ transforms under a conformal change according to
  \begin{equation} \label{TransformationMNor}
    \m^{\mathrm{nor}}_{e^{2 \varphi} g} = e^{-2 \varphi}  \m^{\mathrm{nor}}_{g},
  \end{equation}
  similar to formula \eqref{MassTransformationOdd} in odd dimensions.
\end{corollary}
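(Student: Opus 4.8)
The plan is to avoid recomputing any conformal variations and instead build directly on Proposition~\ref{PropMassTransformation}, which tells us that the operator $P_g = c_n\Delta_g - \m_g$ obeys, via \eqref{PTransformation}, exactly the same conformal transformation rule as the Yamabe operator $Y_g = \Delta_g + a_n\mathrm{scal}_g$ does in \eqref{YamabeTransformation1}. Consequently any fixed linear combination $\alpha Y_g + \beta P_g$ with constant $\alpha,\beta$ transforms the same way,
\begin{equation*}
  (\alpha Y_{e^{2\varphi}g} + \beta P_{e^{2\varphi}g})f = e^{-\frac{n+2}{2}\varphi}\,(\alpha Y_g + \beta P_g)\bigl(e^{\frac{n-2}{2}\varphi}f\bigr).
\end{equation*}

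The heart of the matter is then a one-line algebraic identity. Writing out $c_n Y_g f = c_n\Delta_g f + c_n a_n\,\mathrm{scal}_g f$ and $P_g f = c_n\Delta_g f - \m_g f$, the second-order terms cancel in the difference $c_n Y_g - P_g$, and since $b_n = a_n c_n$ by \eqref{DefNormalizedMass} we are left with
\begin{equation*}
  (c_n Y_g - P_g)f = \bigl(a_n c_n\,\mathrm{scal}_g + \m_g\bigr)f = \m^{\mathrm{nor}}_g\cdot f,
\end{equation*}
i.e.\ $c_n Y_g - P_g$ is simply multiplication by the function $\m^{\mathrm{nor}}_g$ (and this identity holds verbatim for every metric in the conformal class). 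This is precisely the step that uses the particular value of $b_n$, and it is the only place where anything needs to be checked.

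To conclude I would apply the displayed transformation rule with $\alpha=c_n$, $\beta=-1$ to the constant function $f\equiv 1$. On one hand $(c_n Y_{e^{2\varphi}g} - P_{e^{2\varphi}g})(1) = \m^{\mathrm{nor}}_{e^{2\varphi}g}$ by the identity of the previous paragraph applied to the metric $e^{2\varphi}g$; on the other hand it equals $e^{-\frac{n+2}{2}\varphi}(c_n Y_g - P_g)(e^{\frac{n-2}{2}\varphi}) = e^{-\frac{n+2}{2}\varphi}\,\m^{\mathrm{nor}}_g\,e^{\frac{n-2}{2}\varphi} = e^{-2\varphi}\m^{\mathrm{nor}}_g$, which is exactly \eqref{TransformationMNor}. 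There is no real obstacle here: all of the analytic content already sits in Proposition~\ref{PropMassTransformation}, and what remains is the elementary cancellation of top-order terms made possible by $b_n = a_n c_n$. (Alternatively one could argue by brute force, substituting the standard conformal transformation law for $\mathrm{scal}_g$ straight into \eqref{ConformalChangeMWithPhi}; then the only delicate point is keeping the sign and normalization conventions for $\Delta_g$ consistent, after which the $\Delta_g\varphi$- and $|d\varphi|^2$-terms again cancel for the value of $b_n$ above.)
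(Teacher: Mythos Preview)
Your proof is correct and is essentially the same argument as the paper's, just packaged at the operator level: the paper adds the formulas $\m_{u^{4/(n-2)}g} = -u^{-\frac{n+2}{n-2}}P_g u$ and $a_n\,\mathrm{scal}_{u^{4/(n-2)}g} = u^{-\frac{n+2}{n-2}}Y_g u$ and observes that the $\Delta_g u$ terms cancel precisely when $b_n a_n^{-1} = c_n$, which is exactly your identity $c_n Y_g - P_g = \m_g^{\mathrm{nor}}\cdot$ evaluated on the conformal factor. Your formulation has the mild advantage of making the mechanism transparent (a zeroth-order combination of two operators with the same conformal bidegree is automatically covariant), but the content is identical.
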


\begin{proof}
 It is well known \cite[(1.8)]{Yamabe} that the scalar curvature of $u^{\frac{4}{n-2}} g$ is given by
 \begin{equation*}
   \alpha_n\mathrm{scal}_{u^{\frac{4}{n-2}} g} = u^{-\frac{n+2}{n-2}} Y_g u = u^{-\frac{n+2}{n-2}} (\Delta_g u + a_n \mathrm{scal}_g u).
 \end{equation*}
 Setting $u = e^{\frac{n-2}{2} \varphi}$, i.e.\ $e^{2\varphi} = u^{\frac{4}{n-2}}$, we have using \eqref{MassTransformationEven} and \eqref{PTransformation}
  \begin{equation*}
  \begin{aligned}
      \m^{\mathrm{nor}}_{u^{\frac{4}{n-2}}g} &= \m_{u^{\frac{4}{n-2}}g} + b_n \mathrm{scal}_{u^{\frac{4}{n-2}}g}\\
      &=  -u^{-{\frac{n+2}{n-2}}} P_g u + b_n u^{-{\frac{n+2}{n-2}}}\left( a_n^{-1} \Delta_g u + \mathrm{scal}_g u \right)\\
      &= u^{-{\frac{n+2}{n-2}}}\bigl( \m_g u - (c_n - b_na_n^{-1})\Delta_g u +  b_n \mathrm{scal}_g u\bigr) = u^{-\frac{4}{n-2}} \m^{\mathrm{nor}}_{g}.
  \end{aligned} 
  \end{equation*}
This finishes the proof.
\end{proof}

\newpage

The following lemma suggests that this is the "right" definition of the normalized mass.

\begin{lemma} \label{LemmaMassOnStandardSphere}
  On the standard sphere with the round metric $g_{\mathrm{std}}$, we have $\m^{\mathrm{nor}}_{g_{\mathrm{std}}} \equiv 0$.
\end{lemma}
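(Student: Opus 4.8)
The plan is to play the constancy of $\m^{\mathrm{nor}}_{g_{\mathrm{std}}}$, forced by the isometry group $O(n+1)$, off against the conformal transformation law \eqref{TransformationMNor}, which is forced by the strictly larger conformal group of the round sphere.

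First I would argue that $\m^{\mathrm{nor}}_{g_{\mathrm{std}}}$ is a constant function on $S^n$. The local zeta function $\zeta_{L_g}(s,\cdot)$ of \eqref{LocalZeta} is a Riemannian invariant: if $\Psi$ is an isometry of $(S^n, g_{\mathrm{std}})$, then $\{\phi_j \circ \Psi\}$ is again an orthonormal system of eigenfunctions of $L_{g_{\mathrm{std}}}$ with the same eigenvalues, and since $\sum_j \phi_j(x)^2$ taken over an orthonormal basis of a fixed eigenspace does not depend on the choice of basis, $\zeta_{L_{g_{\mathrm{std}}}}(s, \Psi(x)) = \zeta_{L_{g_{\mathrm{std}}}}(s, x)$; hence $\m_{g_{\mathrm{std}}}$ is invariant under the transitive action of $O(n+1)$ and is therefore constant. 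As $\mathrm{scal}_{g_{\mathrm{std}}}$ is constant as well, $\m^{\mathrm{nor}}_{g_{\mathrm{std}}} \equiv c$ for some $c \in \R$.

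Next I would check that the transformation law \eqref{TransformationMNor} is actually available on the sphere, i.e.\ that $\ker L_{g_{\mathrm{std}}} = 0$. This follows from the classical factorization of the GJMS operators on the round sphere into products of shifted Laplacians: one has $L^{(n/2-1)}_{g_{\mathrm{std}}} = \prod_{i=0}^{n/2-2}\bigl(\Delta_{g_{\mathrm{std}}} + (i+1)(n-2-i)\bigr)$, and since every factor is $\Delta_{g_{\mathrm{std}}}$ plus a strictly positive constant, $L_{g_{\mathrm{std}}}$ is positive definite. Now I would choose a conformal diffeomorphism $\Phi$ of $S^n$ that is not an isometry --- for instance the one obtained by conjugating a nontrivial dilation of $\R^n$ with stereographic projection --- and write $\Phi^* g_{\mathrm{std}} = e^{2 \varphi} g_{\mathrm{std}}$ with $\varphi \in \Cinf(S^n)$ necessarily non-constant. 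On the one hand, both $\m^{\mathrm{nor}}$ and $\mathrm{scal}$ pull back tautologically along diffeomorphisms, so $\m^{\mathrm{nor}}_{\Phi^* g_{\mathrm{std}}}(x) = \m^{\mathrm{nor}}_{g_{\mathrm{std}}}(\Phi(x)) = c$ for all $x$. On the other hand, \eqref{TransformationMNor} gives $\m^{\mathrm{nor}}_{\Phi^* g_{\mathrm{std}}} = e^{-2\varphi}\m^{\mathrm{nor}}_{g_{\mathrm{std}}} = c\, e^{-2\varphi}$. Comparing these two expressions and using that $\varphi$ is non-constant forces $c = 0$, which is the claim.

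I do not expect a genuine obstacle here; the argument is soft. The only points needing a little care are the two naturality statements (the mass, being manufactured from the spectral resolution of $L_g$, and the scalar curvature both transform tautologically under a diffeomorphism) and the non-triviality of the conformal factor $\varphi$, i.e.\ the fact that $S^n$ admits conformal transformations that are not conjugate to isometries. A more computational alternative would be to pull the Green's function of $L_{g_{\mathrm{std}}}$ back to flat $\R^n$ via stereographic projection, where it becomes a constant multiple of $|x|^{2m-n}$ and hence has vanishing constant term in its on-diagonal expansion, and then transport the conclusion $\m^{\mathrm{nor}}=0$ back to $S^n$ via conformal covariance; the symmetry argument above sidesteps this entirely.
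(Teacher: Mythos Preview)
Your argument is correct and is genuinely different from the paper's. The paper fixes a point $x_0\in S^n$, passes to a conformal metric $g$ that is \emph{flat} near $x_0$ (so that $\m_g^{\mathrm{nor}}(x_0)=\m_g(x_0)$), and then invokes an external vanishing result (\cite[Thm.~6.9]{LudewigMassInvariance}) for the mass of a simply connected locally conformally flat manifold at a point where the metric is flat. By contrast, you stay entirely inside the paper's machinery: constancy of $\m^{\mathrm{nor}}_{g_{\mathrm{std}}}$ from the transitive isometry group, combined with the conformal law \eqref{TransformationMNor} applied to a non-isometric conformal diffeomorphism, forces the constant to be zero. Your route is more self-contained (no appeal to \cite{LudewigMassInvariance} beyond what the paper already uses to derive \eqref{TransformationMNor}) and exploits the special feature of $S^n$ that its conformal group is strictly larger than its isometry group; the paper's route is more pointwise and would generalize to any locally conformally flat situation where the cited vanishing theorem applies. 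Two small remarks: (i) your explicit product formula for $L^{(n/2-1)}_{g_{\mathrm{std}}}$ has the shifts slightly off compared to the standard Branson factorization, but this is immaterial since all you need is that every factor has strictly positive zeroth-order term, which is true; (ii) the alternative you sketch at the end---reading off the vanishing from the explicit Green's function on $\R^n$ after stereographic projection---is essentially what the paper's invocation of \cite[Thm.~6.9]{LudewigMassInvariance} amounts to in this special case.
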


\begin{proof}
Let $x_0 \in S^n$. Let $g$ be a metric on $S^n$ that is flat near $x_0$ and satisfies $g_{\mathrm{std}} = e^{2\varphi} g$ for some function $\varphi \in C^\infty(M)$.
Now by \eqref{TransformationMNor} and the fact that $\mathrm{scal}_g(x_0) = 0$, we have $\m_{g_{\mathrm{std}}}^{\mathrm{nor}} = e^{-2\varphi(x_0)}\m_g^{\mathrm{nor}}(x_0) = e^{-2\varphi(x_0)}\m_g(x_0)$. On the other hand, $(S^n, g)$ is simply connected, locally conformally flat and flat near $x_0$. By \cite[Thm.~6.9]{LudewigMassInvariance}, this implies that $\m_g(x_0) = 0$.
\end{proof}

It would be nice to use a similar strategy as in the proof of Prop.~\ref{PropMassTransformation} also in the cases that $n-2m = 4, 6, \dots$ etc. to obtain a non-infinitesimal version of \eqref{ConformalVariationMass}. However, in these cases, we could not obtain an explicit formula such as \eqref{LemmaExplicitQ} yet; already for $m=2$, the formula for $Q_g$ given in \cite{LudewigMassInvariance} becomes incredibly complicated and involves the second heat kernel coefficient, second derivatives of the first heat kernel coefficient and fourth derivatives of the index zero heat kernel coefficient.

\section{The Mass Functional}

Let $(X, g)$ be a compact Riemannian manifold of dimension $n \geq 4$ and let $L_g = L_g^{(m)}$ be a GJMS operator of order $2m$. To study the variational problem for the trace of $L_g$, it is natural to consider the {\em mass functional}
\begin{equation} \label{DefMassFunctional}
  \mathcal{M}^{(m)}(X, g) := \frac{\int_X \m_g^{(m)}(x) \dd V_g(x)}{\mathrm{vol}(X, g)^{\frac{2m}{n}}} 
\end{equation}
In odd dimensions, dividing by the volume to the power of $\frac{2m}{n}$ makes the functional scale invariant by \eqref{MassTransformationOdd}, and we will see that the same is true in the case that $n$ is even, $2m = n-2$ and $\ker L_g = 0$. Notice furthermore that by the definition \eqref{DefinitionMass} of the mass, the functional $\mathcal{M}^{(m)}$ is related to the zeta-regularized trace via
\begin{equation} \label{TraceMassFunctional}
\mathrm{Tr}_{\zeta}(L^{-1}_g) = \mathcal{M}^{(m)}(X, g) \cdot \mathrm{vol}(X, g)^{\frac{2m}{n}},
\end{equation}
so that the mass functional seems to be a suitable tool to vary the trace among metrics of fixed volume.

We now restrict to the case that $2m = n-2$. Fixing a metric $g$, we define $\mathcal{M}_g(u) := \mathcal{M}^{(n-2)}(X, u^{\frac{4}{n-2}}g)$ for $u \in C^\infty(X)$. Then by definition, we have
\begin{equation*}
 \sup_{g \in [g_0]} \mathcal{M}^{(n-2)}(X, g) = \sup_{\substack{u \in C^\infty(X)\\u > 0}} \mathcal{M}_{g}(u),
\end{equation*} 
for any metric $g \in [g_0]$. From Prop.~\ref{PropMassTransformation}, we now obtain the following more explicit formula for the mass functional. Using Lemma~\ref{LemmaMassOnStandardSphere}, this proposition directly implies Thm.~C, with a view on \eqref{TraceMassFunctional}.

\begin{proposition} \label{PropMassDecomposition}
  Let $(X, g)$ be a compact Riemannian manifold of even dimension $n \geq 4$ and let $L_g$ be the GJMS operator of order $2m = n-2$, with associated mass functional $\mathcal{M}_g$. Assume $\ker L_g = 0$. Then we have
  \begin{equation} \label{MassFunctionalDecomposition}
    \mathcal{M}_g(u) = - \frac{\int_X u P_g u \,\dd V_g}{\|u\|_p^2} = \frac{\int_X \m^{\mathrm{nor}}_g u^2 \, \dd V_g}{\|u\|_p^2} - b_n \mathcal{Y}_g(u), 
  \end{equation}
  where $p = \frac{2n}{n-2}$, $\|u\|_p$ denotes the $L^p$ norm with respect to the metric $g$ and
  \begin{equation} \label{RelativeYamabeFunctional}
    \mathcal{Y}_g(u) = \frac{1}{a_n} \frac{\int_X u Y_g u \, \dd V_g}{\|u\|_p^2}
  \end{equation}
  is the Yamabe functional.
\end{proposition}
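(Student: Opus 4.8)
The plan is to assemble the claimed formula from the two ingredients already available: the conformal transformation rule for the mass from Proposition~\ref{PropMassTransformation} and the definition of the normalized mass together with its relation to the Yamabe operator. First I would observe that, by definition, $\mathcal{M}_g(u) = \mathcal{M}^{(n-2)}(X, u^{\frac{4}{n-2}}g)$, and that $2m = n-2$, so the denominator $\vol(X, u^{\frac{4}{n-2}}g)^{\frac{2m}{n}}$ equals $\vol(X, u^{\frac{4}{n-2}}g)^{\frac{n-2}{n}}$. Now the volume form of $u^{\frac{4}{n-2}}g$ is $u^{\frac{2n}{n-2}}\,\dd V_g = u^p\,\dd V_g$, so $\vol(X, u^{\frac{4}{n-2}}g) = \|u\|_p^p$, hence the denominator is $\|u\|_p^{p\cdot\frac{n-2}{n}} = \|u\|_p^2$ since $p\cdot\frac{n-2}{n} = \frac{2n}{n-2}\cdot\frac{n-2}{n} = 2$. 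This is exactly the denominator appearing in the statement, and it also explains why dividing by the volume to the power $\frac{2m}{n}$ makes $\mathcal{M}^{(m)}$ scale-invariant in this case.

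Next I would compute the numerator. By \eqref{DefMassFunctional}, the numerator of $\mathcal{M}_g(u)$ is $\int_X \m_{u^{4/(n-2)}g}(x)\,\dd V_{u^{4/(n-2)}g}(x)$. Using \eqref{MassTransformationEven} from Proposition~\ref{PropMassTransformation}, $\m_{u^{4/(n-2)}g} = -u^{-\frac{n+2}{n-2}}P_g u$, and again $\dd V_{u^{4/(n-2)}g} = u^{p}\,\dd V_g = u^{\frac{2n}{n-2}}\,\dd V_g$. Since $\frac{2n}{n-2} - \frac{n+2}{n-2} = \frac{n-2}{n-2} = 1$, the product collapses to $-u\,P_g u\,\dd V_g$, giving the first equality $\mathcal{M}_g(u) = -\frac{\int_X u P_g u\,\dd V_g}{\|u\|_p^2}$. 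For the second equality I would simply substitute the definition $P_g f = c_n\Delta_g f - \m_g f$ and write $\m_g = \m^{\mathrm{nor}}_g - b_n\,\mathrm{scal}_g$ from \eqref{DefNormalizedMass}, so that $-u P_g u = \m^{\mathrm{nor}}_g u^2 - b_n\,\mathrm{scal}_g\,u^2 - c_n\,u\Delta_g u$. Recalling $b_n = a_n c_n$, the last two terms combine as $-c_n\,u(\Delta_g u + a_n\,\mathrm{scal}_g\,u) = -c_n\,u\,Y_g u = -a_n c_n \cdot \frac{1}{a_n}u\,Y_g u = -b_n\,a_n\mathcal{Y}_g(u)\|u\|_p^2$ after dividing by $\|u\|_p^2$ — wait, more carefully: dividing $-c_n u Y_g u$ by $\|u\|_p^2$ and integrating gives $-c_n \cdot \frac{\int_X u Y_g u\,\dd V_g}{\|u\|_p^2} = -c_n a_n \mathcal{Y}_g(u) = -b_n\mathcal{Y}_g(u)$ by \eqref{RelativeYamabeFunctional}. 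This yields the second expression.

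The remaining point to address — and the only place any subtlety enters — is \emph{why} the hypothesis $\ker L_g = 0$ is needed and where it is used. It enters through Proposition~\ref{PropMassTransformation}, whose transformation formula \eqref{MassTransformationEven} is only valid (via the integration of \eqref{ConformalVariationMass}, which itself requires $\ker L_g = 0$) when the operator is invertible, and it must also hold that $\ker L_{u^{4/(n-2)}g} = 0$ for every positive $u$ so that $\m_{u^{4/(n-2)}g}$ is well-defined as $\zeta_{L}(1,\cdot)$; this is automatic since the kernel is a conformal invariant for GJMS operators, but I would state it explicitly. The main obstacle, such as it is, is purely bookkeeping: tracking the exponents of $u$ through the two volume-form substitutions and confirming that they conspire to give precisely $\|u\|_p^2$ in the denominator and a clean quadratic form $\int_X u P_g u\,\dd V_g$ in the numerator, with no leftover powers of $u$. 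Since $\frac{2n}{n-2} - \frac{n+2}{n-2} = 1$ and $\frac{2n}{n-2}\cdot\frac{n-2}{n} = 2$, everything cancels exactly, so there is no real analytic difficulty — the proposition is essentially a repackaging of Proposition~\ref{PropMassTransformation} and the corollary defining $\m^{\mathrm{nor}}_g$.
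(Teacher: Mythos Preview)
Your proposal is correct and follows essentially the same route as the paper: you use \eqref{MassTransformationEven} from Proposition~\ref{PropMassTransformation} together with the volume form transformation to obtain the first equality, and then rewrite $P_g$ via $\m_g = \m^{\mathrm{nor}}_g - b_n\,\mathrm{scal}_g$ and $b_n = a_n c_n$ to recognize $b_n a_n^{-1} Y_g - \m^{\mathrm{nor}}_g$, exactly as the paper does. Your exponent bookkeeping and your remark on the role of $\ker L_g = 0$ are accurate additions that the paper leaves implicit.
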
  

\begin{proof}
By Prop.~\ref{PropMassTransformation},
\begin{equation*}
  \mathcal{M}_g(u) = \frac{\int_X \m_{u^{\frac{4}{n-2}}g}(x) \dd V_{u^{\frac{4}{n-2}}g}(x)}{\mathrm{vol}(X, u^{\frac{4}{n-2}}g)^{\frac{2m}{n}}} 
  = -\frac{\int_X u^{-\frac{n+2}{n-2}} (P_g u) u^{\frac{2n}{n-2}}\dd V_g(x)}{\left(\int_X u^{\frac{2n}{n-2}}\dd V_g\right)^{\frac{2m}{n}}} = - \frac{\int_X u P_g u \,\dd V_g}{\|u\|_p^2}.
\end{equation*}
  Furthermore, by \eqref{DefNormalizedMass}, we have
  \begin{equation*}
    P_g = c_n \Delta_g - \m_g = c_n\Delta_g + b_n \mathrm{scal}_g - \m^{\mathrm{nor}}_g 
    = b_n a_n^{-1} Y_g - \m^{\mathrm{nor}}_g.
  \end{equation*}
  This proves the proposition.
\end{proof}  

Using that by \eqref{MassFunctionalDecomposition}, the mass functional can be written as an energy functional associated to a second order elliptic differential operator, it is a standard observation that we have
\begin{equation*}
 \sup_{g \in [g_0]} \mathcal{M}^{(n-2)}(X, g) = \sup_{u \in C^\infty(X)} \mathcal{M}_{g}(u),
\end{equation*} 
for any metric $g \in [g_0]$. That is, we do not necessarily have to take positive functions $u$ as our test functions. 

\medskip

By the relation \eqref{TraceMassFunctional}, the following proposition proves one half of Thm.~A; the other half is proven by Prop.~\ref{PropTestFunctionEstimate1} below by constructing a test function for the mass functional.

\begin{proposition} \label{PropMinimizerExists}
  Let $(X, g_0)$ be a closed connected Riemannian manifold of even dimension $n \geq 4$ and let $L_{g_0}$ be the GJMS operator of order $2m=n-2$. Assume $\ker L_{g_0} = 0$. Then if 
  \begin{equation*}
     \sup_{g \in [g_0]} \mathcal{M}^{(n-2)}(X, g) > \mathcal{M}^{(n-2)}(S^n, g_{\mathrm{std}}),
  \end{equation*}
 the supremum is attained by a metric $g \in [g_0]$, and this metric has constant mass. 
\end{proposition}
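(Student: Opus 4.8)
The plan is to set up this as a subcritical Yamabe-type variational problem and run the standard concentration-compactness / test-function dichotomy. By Proposition~\ref{PropMassDecomposition}, maximizing $\mathcal{M}^{(n-2)}$ over $[g_0]$ is the same as maximizing the functional $u \mapsto -\int_X u P_g u \,\dd V_g / \|u\|_p^2$ over $u \in C^\infty(X)$ (equivalently over $H^1(X)$, by density and the fact that this is the energy functional of the second-order operator $-P_g = \mathfrak{m}_g - c_n\Delta_g$). Write $S := \sup_u \mathcal{M}_g(u)$ and let $(u_k)$ be a maximizing sequence, normalized so that $\|u_k\|_p = 1$. First I would record the basic compactness mechanism: because $-P_g$ differs from a positive multiple of the Yamabe operator only by the zeroth-order term $-\mathfrak{m}_g^{\mathrm{nor}}$, and because we will want $S > 0$ to even have a chance (note $-\int u P_g u = b_n a_n^{-1}\int u Y_g u - \int \mathfrak{m}_g^{\mathrm{nor}} u^2$; on a general manifold the quadratic form $-\int u P_g u$ controls $\|u\|_{H^1}^2$ up to lower-order corrections when the Yamabe constant is positive, but to be safe one argues only with the $L^p$-normalized sequence and Sobolev embedding), the maximizing sequence is bounded in $H^1(X)$. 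Passing to a subsequence, $u_k \rightharpoonup u$ weakly in $H^1$, strongly in $L^2$, and a.e.

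The heart of the argument is the dichotomy. Decompose $u_k = u + r_k$ with $r_k \rightharpoonup 0$. By the Brezis–Lieb lemma, $\|u_k\|_p^p = \|u\|_p^p + \|r_k\|_p^p + o(1)$, so $1 = \|u\|_p^p + \|r_k\|_p^p + o(1)$; and since the cross terms in the quadratic form involve either the compact zeroth-order operator $\mathfrak{m}_g^{\mathrm{nor}}$ (where $r_k \to 0$ strongly in $L^2$ kills the cross term) or the Dirichlet form (where weak convergence makes the cross term vanish), one gets $-\int u_k P_g u_k = -\int u P_g u - c_n\int |\nabla r_k|^2 + o(1)$, using also that $\int \mathfrak{m}_g^{\mathrm{nor}} r_k^2 \to 0$. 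Now the gradient term for $r_k$ is estimated from below by the sharp Sobolev inequality on the sphere: since $r_k$ concentrates (lives at vanishing scale in the limit, in $L^p$), $c_n\int |\nabla r_k|^2 \geq c_n \Lambda_n \|r_k\|_p^2 + o(1)$ where $c_n\Lambda_n = \mathcal{M}^{(n-2)}(S^n, g_{\mathrm{std}}) =: S_{\mathrm{sph}}$ — this is precisely the local-model comparison, which holds because near a concentration point the metric is asymptotically Euclidean and the subcritical operator $-P_g$ has leading symbol $c_n|\xi|^2$, matching the sphere's Yamabe operator up to the same constant (and Lemma~\ref{LemmaMassOnStandardSphere} identifies the sphere value cleanly). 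Therefore
\begin{equation*}
  S = -\int u P_g u - c_n\int|\nabla r_k|^2 + o(1) \leq S \|u\|_p^2 + \cdots \leq S\,\|u\|_p^2 - S_{\mathrm{sph}}\|r_k\|_p^2 + o(1),
\end{equation*}
wait — more carefully, $-\int u P_g u \leq S\|u\|_p^2 \leq S \|u\|_p^p{}^{2/p}$ and $-c_n\int |\nabla r_k|^2 \leq -S_{\mathrm{sph}}\|r_k\|_p^2$, so $S \leq S(\|u\|_p^p)^{2/p} - S_{\mathrm{sph}}(\|r_k\|_p^p)^{2/p} + o(1)$. With $a := \|u\|_p^p, b:=\lim\|r_k\|_p^p$, $a+b=1$, and $2/p < 1$, strict concavity of $t\mapsto t^{2/p}$ gives $S \le S a^{2/p} - S_{\mathrm{sph}} b^{2/p} < S$ unless $b = 0$ (here we use $S > S_{\mathrm{sph}} \ge 0$, which is exactly the hypothesis of the Proposition — this is where strictness enters decisively). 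Hence $b = 0$, so $r_k \to 0$ strongly in $L^p$, $\|u\|_p = 1$, and $u$ is a maximizer.

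Then I would finish: the maximizer $u$ satisfies the Euler–Lagrange equation $-P_g u = S\, u^{p-1}$, i.e. $c_n\Delta_g u - \mathfrak{m}_g u = -S u^{(n+2)/(n-2)}$; elliptic regularity bootstraps $u$ to smoothness, and the strong maximum principle (applied after noting $\mathfrak{m}_g - c_n\Delta_g + (\text{const})$ can be arranged coercive, or just that $u \ge 0$, $u\not\equiv 0$ on a connected manifold forces $u>0$) gives $u > 0$ everywhere, so $g := u^{4/(n-2)} g$ is a genuine metric in $[g_0]$ attaining the supremum. Finally, that $g$ has constant mass: by Prop.~\ref{PropMassTransformation}, $\mathfrak{m}_g = -u^{-(n+2)/(n-2)} P_g u = -u^{-(n+2)/(n-2)}\cdot(-S u^{(n+2)/(n-2)}) = S$, a constant. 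The main obstacle is making the concentration estimate $c_n\int|\nabla r_k|^2 \gtrsim S_{\mathrm{sph}}\|r_k\|_p^2$ rigorous — one must either invoke a Lions-type concentration-compactness lemma on manifolds or carry out an explicit rescaling around concentration points and compare to the Euclidean/sphere sharp Sobolev constant, controlling the error from the curvature of $g$ and from the zeroth-order term $\mathfrak{m}_g$; relatedly, one must verify the maximizing sequence does not vanish entirely (the "vanishing" branch of the trichotomy), which is ruled out precisely because $S > 0$ forces the quadratic form to be bounded below away from $0$ along the sequence, preventing $u_k \to 0$ in $L^2_{\mathrm{loc}}$ everywhere.
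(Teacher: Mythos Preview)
Your approach---running Lions-style concentration-compactness directly on a maximizing sequence---is different from the paper's. The paper simply observes that $\mathcal{M}_g = -\mathcal{P}_g$, where $\mathcal{P}_g(u) = \int_X u P_g u\,\dd V_g / \|u\|_p^2$ is a Yamabe-type functional for the second-order operator $P_g = c_n\Delta_g - \m_g$, and then invokes the classical existence theory (Yamabe--Trudinger--Aubin, as in Section~4 of Lee--Parker) as a black box: a smooth positive minimizer of $\mathcal{P}_g$ exists whenever $\inf \mathcal{P}_g < c_n a_n\, \mathcal{Y}(S^n, g_{\mathrm{std}}) = b_n\mathcal{Y}(S^n, g_{\mathrm{std}})$, and Lemma~\ref{LemmaMassOnStandardSphere} identifies this threshold with $-\mathcal{M}^{(n-2)}(S^n, g_{\mathrm{std}})$, converting the subcriticality condition into exactly the hypothesis of the Proposition. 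The Euler--Lagrange and constant-mass steps are then identical to yours. So your route is self-contained (and, incidentally, uses a different analytic mechanism than the subcritical-exponent approximation that Lee--Parker actually carry out), while the paper's is a two-line reduction to a cited result.

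There is a sign slip you should correct: by \eqref{MassFunctionalSphere} one has $S_{\mathrm{sph}} = \mathcal{M}^{(n-2)}(S^n, g_{\mathrm{std}}) = -b_n\mathcal{Y}(S^n, g_{\mathrm{std}}) < 0$, so the correct identification is $c_n\Lambda_n = -S_{\mathrm{sph}}$, and your claim ``$S > S_{\mathrm{sph}} \geq 0$'' is false. With the sign fixed the dichotomy inequality reads $S \leq S\,a^{2/p} + S_{\mathrm{sph}}\,b^{2/p}$ (with $a+b=1$), and the concavity argument still forces $b=0$ using only $S > S_{\mathrm{sph}}$, regardless of the sign of $S$: if $0<a,b<1$ then $a^{2/p}+b^{2/p}>1$ gives $1-a^{2/p}<b^{2/p}$, and a short case analysis on the sign of $S$ yields $S\,a^{2/p}+S_{\mathrm{sph}}\,b^{2/p}<S$; the case $a=0$ gives $S\leq S_{\mathrm{sph}}$ directly.
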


\begin{proof}
By formula \eqref{MassFunctionalDecomposition}, differentiating the functional $\mathcal{M}_g(u)$ yields
\begin{equation*}
  \frac{\dd}{\dd s}\Bigr|_{s=0} \mathcal{M}_g(u+sv) = - \frac{2}{\|u\|_p^2} \int_X \left( P_g u - \frac{\mathcal{M}_g(u)}{\|u\|^{p-2}_p} |u|^{p-2}u\right) v\, \dd V_g.
\end{equation*}
Therefore, since any positive smooth minimizer $u$ is necessarily a critical point of $\mathcal{M}_g$, such a minimizer necessarily satisfies the partial differential equation
\begin{equation} \label{CriticalPDE}
  P_g u = \Lambda u^{p-1} ,
\end{equation}
for some $\Lambda \in \R$, which implies together with \eqref{MassTransformationEven} that the metric $u^{\frac{4}{n-2}}g$ has constant mass.

We now discuss the problem of finding a minimizer. For a general operator $P_g$ of the form $P_g = c\Delta_g + f$ for $f \in C^\infty(X)$, $c \in \R$, consider the functional
\begin{equation*}
  \mathcal{P}_g(u) := \frac{\int_X u P_g u \,\dd V_g}{\|u\|_p^2}
\end{equation*}
From the combined efforts of Yamabe \cite{Yamabe}, Trudinger \cite{Trudinger} and Aubin \cite{Aubin, Aubin2}, we know how to construct a minimizer of such a functional: a smooth and positive minimizer exists in the case that
\begin{equation*}
  \inf_{u \in C^\infty(X)} \frac{\int_X u P_g u \,\dd V_g}{\|u\|_p^2} < c a_n \mathcal{Y}(S^n, g_{\mathrm{std}}),
\end{equation*}
where $\mathcal{Y}(S^n, g_{\mathrm{std}}) \equiv n(n-1) \omega_n^{2/n}$ is the Yamabe constant of the standard sphere. This result is usually formulated in the case that $c=1$ and $f=a_n\mathrm{scal}$ in which $P_g$ is the Yamabe operator, but following e.g.\ the proof in Section~4 of \cite{LeeParker} gives the same result in this more general setting.

In our case, $c= c_n = b_n a_n^{-1}$, $f = -\m_g$. Then $\mathcal{M}_g(u) = - \mathcal{P} _g(u)$ by \eqref{MassFunctionalDecomposition}, so that a smooth, positive maximizer $u$ of the mass functional exists, provided
\begin{equation*}
  \sup_{g \in [g_0]} \mathcal{M}_g(u) > - b_n \mathcal{Y}(S^n, g_{\mathrm{std}}).
\end{equation*}

Finally by Lemma~\ref{LemmaMassOnStandardSphere}, we have $\m^{\mathrm{nor}}_{g_{\mathrm{std}}} \equiv 0$ on $S^n$ with the standard metric $g_{\mathrm{std}}$. Hence by \eqref{MassFunctionalDecomposition}, we have $-b_n \mathcal{Y}(S^n, g_{\mathrm{std}}) = \mathcal{M}^{(n-2)}(S^n, g_{\mathrm{std}})$, which finishes the proof.
\end{proof}

\begin{proposition} \label{PropEquality}
  Within the conformal class of the standard metric on $S^n$, the mass functional is maximized precisely at the standard metric and its images under conformal diffeomorphisms.
\end{proposition}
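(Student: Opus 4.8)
The plan is to reduce the statement to the rigidity part of the sharp Sobolev (Yamabe) inequality on the round sphere. Fix the background metric $g = g_{\mathrm{std}}$ and write, as in Prop.~\ref{PropMassDecomposition}, an arbitrary metric in the conformal class as $u^{\frac{4}{n-2}}g_{\mathrm{std}}$ with $u \in C^\infty(S^n)$, $u > 0$. Since $\m^{\mathrm{nor}}_{g_{\mathrm{std}}} \equiv 0$ by Lemma~\ref{LemmaMassOnStandardSphere}, the decomposition \eqref{MassFunctionalDecomposition} collapses to
\[
  \mathcal{M}_{g_{\mathrm{std}}}(u) = -\,b_n\, \mathcal{Y}_{g_{\mathrm{std}}}(u),
\]
and $b_n = a_n c_n > 0$ by \eqref{DefNormalizedMass}. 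Hence maximizing $\mathcal{M}^{(n-2)}$ over $[g_{\mathrm{std}}]$ is precisely the same problem as minimizing the Yamabe functional $\mathcal{Y}_{g_{\mathrm{std}}}$ of \eqref{RelativeYamabeFunctional} over positive smooth functions, and a positive $u$ realizes the maximum of $\mathcal{M}_{g_{\mathrm{std}}}$ if and only if it realizes the minimum of $\mathcal{Y}_{g_{\mathrm{std}}}$.

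Next I would invoke the classical solution of the Yamabe problem on the sphere: the functional $\mathcal{Y}_{g_{\mathrm{std}}}$ equals the standard Yamabe quotient of the metric $u^{\frac{4}{n-2}}g_{\mathrm{std}}$, its infimum over $u \not\equiv 0$ equals $\mathcal{Y}(S^n, g_{\mathrm{std}}) = n(n-1)\omega_n^{2/n}$, and — by the analysis of the equality case, due to Aubin and Talenti for the equivalent Euclidean Sobolev inequality transported to $S^n$ via stereographic projection, together with Obata's theorem — the minimizers are exactly the positive functions $u$ for which $u^{\frac{4}{n-2}}g_{\mathrm{std}}$ has constant scalar curvature, i.e.\ exactly the metrics of the form $\phi^* g_{\mathrm{std}}$ for $\phi$ a conformal diffeomorphism of $S^n$ (the constant-rescaling ambiguity in $u$ being harmless, as $\mathcal{M}_{g_{\mathrm{std}}}$ is invariant under $u \mapsto \lambda u$). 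Since $u \equiv 1$ is among these, the supremum is attained, equals $-b_n\,\mathcal{Y}(S^n, g_{\mathrm{std}}) = \mathcal{M}^{(n-2)}(S^n, g_{\mathrm{std}})$, and the maximizing metrics in $[g_{\mathrm{std}}]$ are precisely $g_{\mathrm{std}}$ and its images under conformal diffeomorphisms — note $\mathcal{M}^{(n-2)}$, being built from the natural operator $L_g$, is invariant under such diffeomorphisms.

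The one point requiring care is the passage between a maximizer of $\mathcal{M}_{g_{\mathrm{std}}}$ among \emph{positive} smooth $u$ and a minimizer of the Yamabe quotient among \emph{all} of $C^\infty(S^n)$: an a priori minimizer over all test functions might change sign, but replacing $u$ by $|u|$ does not increase the quotient, and elliptic regularity together with the strong maximum principle for $Y_{g_{\mathrm{std}}} = \Delta_{g_{\mathrm{std}}} + \tfrac{n(n-2)}{4}$ (whose zeroth-order term is positive) forces any minimizer to be, up to sign, a positive smooth function; so the two variational problems genuinely coincide. Beyond this, there is no real obstacle: the proposition is a direct translation, through Prop.~\ref{PropMassDecomposition} and Lemma~\ref{LemmaMassOnStandardSphere}, of the sharp Sobolev inequality and its rigidity on $S^n$, and the only substantive input is that classical rigidity statement, which we may simply cite.
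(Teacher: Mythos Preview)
Your proof is correct and follows essentially the same route as the paper: use Lemma~\ref{LemmaMassOnStandardSphere} together with the decomposition \eqref{MassFunctionalDecomposition} to reduce $\mathcal{M}_{g_{\mathrm{std}}}$ to $-b_n\,\mathcal{Y}_{g_{\mathrm{std}}}$, then invoke the known characterization of the Yamabe minimizers on the round sphere. The paper cites \cite[Thm.~3.2]{LeeParker} for this last step and omits your discussion of the positivity of minimizers, but otherwise the arguments coincide.
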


\begin{proof}
By Lemma~\ref{LemmaMassOnStandardSphere}, we have $\m^{\mathrm{nor}}_{g_{\mathrm{std}}} \equiv 0$ on $S^n$ with the standard metric $g_{\mathrm{std}}$ and by the conformal transformation law \eqref{TransformationMNor} of the normalized mass, the same is true for any metric $g \in [g_{\mathrm{std}}]$. Therefore, from \eqref{MassFunctionalDecomposition}, we obtain
\begin{equation} \label{MassFunctionalSphere}
-b_n \mathcal{Y}(S^n, g) = \mathcal{M}^{(n-2)}(S^n, g)
\end{equation}
for all $g \in [g_{\mathrm{std}}]$. Hence the proposition follows directly from the corresponding fact for the Yamabe functional (cf.\ \cite[Thm.~3.2]{LeeParker}.
\end{proof}

\section{A Test Function Estimate}

In this section, we finish the proof of Thm.~A as well as Thm.~B, by constructing suitable test functions for the mass functional.

\medskip

With a view on \eqref{TraceMassFunctional}, Thm.~A follows from Prop.~\ref{PropMinimizerExists} together with the following assertion.

\begin{proposition} \label{PropTestFunctionEstimate1}
Let $(X, g_0)$ be a closed connected Riemannian manifold of even dimension $n \geq 4$. Suppose that $\ker L_{g_0} = 0$, where $L_{g_0}$ is the GJMS operator of order $2m=n-2$. Then we have
\begin{equation*}
\sup_{g \in [g_0]} \mathcal{M}^{(n-2)}(X, g) \geq \mathcal{M}^{(n-2)}(S^n, g_{\mathrm{std}})
\end{equation*}
for the corresponding mass functionals.
\end{proposition}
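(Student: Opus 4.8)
The plan is to produce, for the conformal class $[g_0]$ on $X$, a family of test functions concentrating near a point and to show that the mass functional evaluated on these test functions tends, in the concentration limit, to the value $\mathcal{M}^{(n-2)}(S^n, g_{\mathrm{std}})$. By Proposition~\ref{PropMassDecomposition}, for any $g \in [g_0]$ we have
\begin{equation*}
  \mathcal{M}_g(u) = \frac{\int_X \m^{\mathrm{nor}}_g u^2 \, \dd V_g}{\|u\|_p^2} - b_n \mathcal{Y}_g(u),
\end{equation*}
so the quantity to be estimated splits into a ``normalized mass'' term and (minus $b_n$ times) the Yamabe functional. The strategy mirrors the classical Aubin--Schoen test-function analysis in the Yamabe problem: fix $x_0 \in X$, choose conformal normal coordinates (or simply geodesic normal coordinates, since the $\mathrm{scal}_g$ contribution is subleading) centered at $x_0$, and use the standard bubble $u_\varepsilon(x) = \bigl(\varepsilon/(\varepsilon^2 + |x|^2)\bigr)^{(n-2)/2}$ truncated by a cutoff supported in a small ball. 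As $\varepsilon \to 0$ the mass is concentrated at smaller and smaller scales, where the metric looks more and more Euclidean, hence more and more like the (stereographic image of the) round sphere.

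First I would recall that, because $b_n > 0$ and the Yamabe functional enters with a minus sign, one needs an upper bound on $\mathcal{Y}_g(u_\varepsilon)$; the standard computation gives $\mathcal{Y}_g(u_\varepsilon) \to \mathcal{Y}(S^n, g_{\mathrm{std}}) = n(n-1)\omega_n^{2/n}$ from above as $\varepsilon \to 0$ (this is exactly the content of Aubin's test-function estimate, valid on any manifold, giving $\inf \mathcal{Y}_g \le \mathcal{Y}(S^n,g_{\mathrm{std}})$). Next I would estimate the normalized-mass term: since $u_\varepsilon^2$ concentrates at $x_0$ with $\int_X u_\varepsilon^2 \, \dd V_g = O(\varepsilon^2)$ while $\|u_\varepsilon\|_p^2 \to \omega_n^{2/n}$ stays bounded, we get $\int_X \m^{\mathrm{nor}}_g u_\varepsilon^2 \,\dd V_g / \|u_\varepsilon\|_p^2 = O(\varepsilon^2) \to 0$ (for $n = 4$ one must be slightly careful: $\int u_\varepsilon^2$ is $O(\varepsilon^2 \log(1/\varepsilon))$, but this still vanishes). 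Combining, $\mathcal{M}_g(u_\varepsilon) \to -b_n \,\mathcal{Y}(S^n,g_{\mathrm{std}})$ as $\varepsilon \to 0$, and by \eqref{MassFunctionalSphere} the right-hand side equals $\mathcal{M}^{(n-2)}(S^n, g_{\mathrm{std}})$. Hence $\sup_{g \in [g_0]} \mathcal{M}^{(n-2)}(X, g) = \sup_u \mathcal{M}_g(u) \ge \lim_{\varepsilon \to 0}\mathcal{M}_g(u_\varepsilon) = \mathcal{M}^{(n-2)}(S^n, g_{\mathrm{std}})$, which is the claim.

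The main obstacle is making sure the bubble family is legitimate as a test family for $\mathcal{M}_g$, i.e.\ that the ``we may use arbitrary $u \in C^\infty(X)$, not just positive ones'' remark applies and that the functional is continuous along $u_\varepsilon$ (the cutoff is smooth and positive, so $u_\varepsilon \in C^\infty(X)$, $u_\varepsilon > 0$, and no sign subtlety arises). A secondary technical point is the low-dimensional case $n=4$, where the logarithmic divergences in the bubble integrals need the sharper asymptotics; but since we only need the normalized-mass term to vanish in the limit — not a sign-definite second-order term as in the genuine Yamabe problem — the crude bounds $\int u_\varepsilon^2 \,\dd V_g = o(1)\|u_\varepsilon\|_p^2$ and $\mathcal{Y}_g(u_\varepsilon) = \mathcal{Y}(S^n,g_{\mathrm{std}}) + o(1)$ suffice and require no delicate cancellation. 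Finally I would note that the hypothesis $\ker L_{g_0} = 0$ is used only to guarantee (via Proposition~\ref{PropMassTransformation}) that $\m_g$, and hence $\m^{\mathrm{nor}}_g$, is a well-defined smooth function on $X$ so that the decomposition \eqref{MassFunctionalDecomposition} holds; it plays no further role in the estimate.
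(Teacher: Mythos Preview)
Your proposal is correct and follows essentially the same route as the paper. Both arguments plug the standard Aubin bubble (cut off near a point $x_0$, in conformal normal coordinates) into the decomposition of Proposition~\ref{PropMassDecomposition}, observe that the normalized-mass term is $o(1)$ while the Yamabe part is bounded above by $\mathcal{Y}(S^n,g_{\mathrm{std}})+o(1)$, and conclude via \eqref{MassFunctionalSphere}; the only cosmetic difference is that you invoke Aubin's test-function estimate as a black box, whereas the paper redoes the gradient and $L^p$-norm computations explicitly (using $\det g_{ij}\equiv 1$ in conformal normal coordinates) before combining them.
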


\begin{proof}
Let $g \in [g_0]$ be a metric such that in Riemannian normal coordinates around $x_0$, we have $\det(g_{ij}) \equiv 1$. Such a metric, called {\em conformal normal coordinates}, is well known to exist (cf.\ \cite[Section~5]{LeeParker}, \cite{Guenther}). Moreover, we can choose the conformal factor relating $g$ and  $g_0$ to be equal to one at $x_0$, so that  $\m_g^{\mathrm{nor}}(x_0) = \m_{g_0}^{\mathrm{nor}}(x_0)$ by the transformation law \eqref{TransformationMNor}. Another feature of conformal normal coordinates around $x_0$ is that one has
\begin{equation} \label{ScalarCurvatureEstimate}
  |\mathrm{scal}_g| \leq C r^2,
\end{equation}
for some $C>0$, where $r$ denotes the distance function from $x_0$ \cite[Thm.~5.1]{LeeParker}. These observations will simplify several calculations.

The proof now consists of finding a suitable family of test functions for the functional $\mathcal{M}_g(u)$. The construction of these will be very similar to the approach in \cite[Section~3]{LeeParker}.

For $\alpha >0$, define $u_\alpha \in C^\infty(\R^n)$ by
\begin{equation*}
  u_\alpha(x) := \left( \frac{|x|^2 + \alpha^2}{\alpha}\right)^{\frac{2-n}{2}}.
\end{equation*}
For a suitable value of $\alpha$, this is the conformal factor that relates the standard metric in $\R^n$ to the round metric of $S^n$ when the latter is pushed forward to $\R^n$ via the stereographic projection. One of the features of this family of functions $u_\alpha$ is that for $p=\frac{2n}{n-2}$, the $L^p$ norm $\|u_\alpha\|_p$ is independent of $\alpha$; in fact $\|u_\alpha\|_p^p \equiv 2^{-n}{\omega_n}$.

For $\varepsilon>0$ small, let furthermore $\eta$ be a smooth function on $\R^n$ which satisfies $\eta(x) = 1$ for $|x| < \varepsilon$ and $\eta(x) = 0$ for $|x| > 2\varepsilon$. Finally, given Riemannian normal coordinates $\textbf{x}$ around $x_0$, defined on $U \subset X$, we set on $U$
\begin{equation*}
  \psi_{\alpha} := \textbf{x}^* (\eta \cdot  u_\alpha)
\end{equation*}
and extend $\psi_{\alpha}$ by zero to a function on all of $X$. Here we choose $\varepsilon$ so small that $B_{2\varepsilon}(x_0) \subset U$, meaning that $\psi_\alpha \in C^\infty(X)$.

We need to estimate
\begin{equation} \label{ToEstimate}
  \int_X \psi_{\alpha} P_g \psi_{\alpha} \dd V_g = c_n \int_X |d \varphi_{\alpha}|^2 \dd V_g + b_n\int_X \mathrm{scal}_g \psi_{\alpha}^2 \dd V_g - \int_X \m_g^{\mathrm{nor}} \psi_{\alpha}^2 \dd V_g
\end{equation}
from below by $\|\psi_\alpha\|_p^2$.
Notice that in normal coordinates, we have $g^{rr} = 1$, hence since $\varphi_{\alpha}$ is a radial function, we have
\begin{equation} \label{Term1a}
\begin{aligned}
  \int_X |d \varphi_{\alpha}|^2 \dd V_g &= \int_{B_{2\varepsilon}(x_0)} g^{rr} (\partial_r \varphi_{\alpha})^2 \dd V_g 
  = \int_{\R^n} \bigl(\partial_r(\eta\cdot u_\alpha)\bigr)^2 \dd x \\
  &= \int_{\R^n} \Bigl( \eta^2 |du_\alpha|^2 + 2\eta u_\alpha (\partial_r\eta)(\partial_r u_\alpha) + (\partial_r \eta)^2 u_\alpha^2\Bigr) \dd x.
  \end{aligned}
\end{equation}
Here we also used that $\det(g_{ij}) \equiv 1$ on a neighborhood of $x_0$ so that $\mathbf{x}_* \dd V_g = \dd x$ on $U$. For the second term of \eqref{Term1a}, we have
\begin{equation*}
\begin{aligned}
  2\int_{\R^n} \eta u_\alpha (\partial_r\eta)(\partial_r u_\alpha) \dd x &\leq C \int_{B_{2\varepsilon}(0) \setminus B_{\varepsilon}(0)} \bigl|u_\alpha \cdot \partial_r u_\alpha\bigr| \dd x \\
  &= C \omega_{n-1}(n-2) \int_\varepsilon^{2\varepsilon} r^{n-1} \bigl|u_\alpha(r) \cdot \partial_r u_\alpha(r) \bigr|\dd r \\
  &\leq (n-2) \alpha^{n-2} \int_\varepsilon^{2\varepsilon} r^{n-1}(r^2 + \alpha^2)^{\frac{2-n}{2}} (r^2 + \alpha^2)^{-\frac{n}{2}} r \dd r\\
  &\leq (n-2) \alpha^{n-2} \int_\varepsilon^{2\varepsilon} r^{n-1} r^{2-n} r^{-n} r \dd r\\
   &\leq C \alpha^{n-2}
\end{aligned}
\end{equation*}
Here and in the following, $C$ denotes some positive constant (independent of $\alpha$), the exact value of which is unimportant and may change from line to line.
Similarly, we have
\begin{equation*}
\int_{\R^n}  (\partial_r \eta)^2 u_\alpha^2 \dd x \leq C \alpha^{n-2}.
\end{equation*}
Hence the latter two terms in \eqref{Term1a} can both be estimated by $C \alpha^{n-2}$ and we obtain
\begin{equation} \label{Term1}
   c_n\int_X |d \varphi_{\alpha}|^2 \dd V_g \leq c_n\|du_\alpha\|_2^2 + C \alpha^{n-2} = b_n\mathcal{Y}(S^n, g_{\mathrm{std}}) \|u_\alpha\|_p^2 + C \alpha^{n-2},
\end{equation}
where we used that we have $\|du_\alpha\|_2^2 = a_n\mathcal{Y}(S^n, g_{\mathrm{std}}) \|u_\alpha\|_p^2$ for any $\alpha>0$ (compare by \cite[Thm.~3.3]{LeeParker}).

To estimate the other terms of \eqref{ToEstimate}, we need the following calculus lemma (this can be found as Lemma~3.5 in \cite{LeeParker}; the proof is simple and we do not repeat it here).

\begin{lemma} \label{LemmaLeeParker}
  Suppose $k > -n$. Then as $\alpha \rightarrow 0$, the integral
  \begin{equation*}
    \int_0^\varepsilon u_\alpha(r)^2 r^{k+ n-1} \dd r
  \end{equation*}
  is bounded above and below by constant multiples of $\alpha^{k+2}$ if $n>k+4$, $\alpha^{k+2}\log(1/\alpha)$ if $n=k+4$ and $\alpha^{n-2}$ if $n< k+4$. 
\end{lemma}

Now we can estimate using Lemma~\ref{LemmaLeeParker} with $k=0$ 
\begin{equation} \label{Term2and3}
\begin{aligned}
\int_X \bigl(b_n\mathrm{scal}_g  -  \m_g^{\mathrm{nor}} \bigr) \psi_{\alpha}^2 \dd V_g &\leq C \int_{B_{2\varepsilon}(x_0)} \psi_{\alpha}^2 \dd V_g
\leq C \int_{B_{2\varepsilon}(0)} u_\alpha(x)^2 \dd x\\
&= C \omega_{n-1} \int_0^{2\varepsilon} u_{\alpha}(r)^2 r^{n-1} \dd r \\
&\leq \left.\begin{cases} C \alpha^2 & n \geq 6 \\ C \alpha^2 \log(\alpha) & n=4\end{cases} \right\} \leq C \alpha,
\end{aligned}
\end{equation}
where the third integral is over the $2\varepsilon$-ball in $\R^n$.
We also need an estimate of $\|\varphi_{\alpha}\|_p$ from below. Here, again because we are working in conformal normal coordinates, we have
\begin{equation*}
 \|\varphi_{\alpha}\|_p^p = \int_{B_{2\varepsilon}(x_0)} \varphi_{\alpha}(x)^p \dd V_g(x)  = \int_{B_{\varepsilon}(0)} \eta(x)^p u_\alpha(x)^p \dd x = \|\eta u_\alpha\|_p^p.
\end{equation*}
Now $(r^2+1)^{-n} \leq r^{-2n}$ for $r>0$ so that
\begin{equation*}
\begin{aligned}
  \|(1-\eta)u_\alpha\|_p^p &\leq \int_{\R^n \setminus B_{\varepsilon}(0)} u_\alpha(x)^p \dd x = \omega_{n-1} \int_\varepsilon^\infty \frac{\alpha^n r^{n-1}}{(r^2+\alpha^2)^n} \dd r\\
  &= \omega_{n-1} \int_{\varepsilon/\alpha}^\infty \frac{r^{n-1}}{(r^2+1)^n} \dd r \leq \omega_{n-1}  \int_{\varepsilon/\alpha}^\infty r^{-1-n} \dd r = \frac{\omega_{n-1}}{n\varepsilon^n} \alpha^n ,
\end{aligned}  
\end{equation*}
hence
\begin{equation} \label{EstimateFromBelow}
  \|\psi_{\alpha}\|_p^2  = \|\eta u_\alpha\|_p^2 = \bigl(\| u_\alpha\|_p^p - \|(1-\eta)u_\alpha\|^p\bigr)^{2/p} \geq \|u_\alpha\|_p^2 \left( 1 - C \alpha^n\right)^{2/p}.
\end{equation}
Plugging \eqref{Term1} and \eqref{Term2and3} into \eqref{ToEstimate} and using that $\|u_\alpha\|_p$ is independent from $\alpha$ then yields for $\alpha$ small
\begin{equation*}
\begin{aligned}
\int_X \psi_{\alpha} P_g \psi_{\alpha} \dd V_g &\leq \Bigl(b_n \mathcal{Y}(S^n, g_{\mathrm{std}}) + C\alpha^{n-2} + C \alpha\Bigr) \|u_\alpha\|_p^2\\
&\leq \Bigl(b_n \mathcal{Y}(S^n, g_{\mathrm{std}}) + C\alpha^{n-2} + C \alpha\Bigr)  \bigl( 1 - C \alpha^n \bigr)^{-2/p}  \|\psi_{\alpha}\|_p^2.
\end{aligned}
\end{equation*}
We obtain the desired estimate 
\begin{equation*}
- \sup_{\alpha>0} \mathcal{M}_g(\psi_{\alpha}) = \inf_{\alpha>0} -\mathcal{M}_g(\psi_{\alpha}) = \inf_{\alpha>0} \frac{ \int_X \psi_{\alpha} P_g \psi_{\alpha} \dd V_g}{\|\psi_{\alpha}\|_p^2}\leq b_n \mathcal{Y}(S^n, g_{\mathrm{std}}).
\end{equation*}
This finishes the proof with a view on \eqref{MassFunctionalSphere}.
\end{proof}

We now proceed to the proof of Thm.~B. We remark that with a view on \eqref{MassFunctionalDecomposition}, Thm.~B is obvious in the case that $\m^{\mathrm{nor}}(x)\geq 0$ for all $x \in M$, as then
\begin{equation*}
  \sup_{g \in [g_0]} \mathcal{M}^{(n-2)}(X, g) \geq -\inf_{g \in [g_0]} b_n\mathcal{Y}(X, g),
\end{equation*}
and it is well-known that the infimum over $\mathcal{Y}(X, g)$ is strictly smaller than $\mathcal{Y}(S^n, g_{\mathrm{std}})$ unless $(X, g)$ is conformal to the standard sphere (see Thm.~B and C in \cite{LeeParker}), so that Thm.~B follows in this case with a view on \eqref{MassFunctionalSphere}.

\begin{proof}[of Thm.~B]
We will use the same test function $\psi_\alpha$ as in the proof of Prop.~\ref{PropTestFunctionEstimate1} above. However, in the case that $\m_g^{\mathrm{nor}}(x_0) >0$, we can obtain a better result by estimating the second and the third term on the right hand side in \eqref{ToEstimate} separately. For the second term, we use \eqref{ScalarCurvatureEstimate} and Lemma~\ref{LemmaLeeParker} with $k=2$ to obtain
\begin{equation} \label{Term2}
\begin{aligned}
  b_n\int_X \mathrm{scal}_g \psi_{\alpha,}^2 \dd V_g &\leq C \int_{B_{2\varepsilon}(x_0)} r^2 \psi_{\alpha}^2 \dd V_g \leq C \int_0^{2\varepsilon} r^{n+1} u_\alpha(r)^2 \dd r \\
  &\leq 
  \begin{cases} C \alpha^4 & n \geq 8 \\ C\alpha^4 \log(1/\alpha) & n = 6 \\ C\alpha^2 & n = 4 \end{cases}.
  \end{aligned}
\end{equation}
Regarding the third term, notice that since $\m_g^{\mathrm{nor}}(x_0) >0$, the same is true for small neighborhoods of $x_0$, so we can assume that  $\varepsilon$ was chosen so small that
\begin{equation*}
  \inf_{x \in B_{2\varepsilon}(x_0)} \m_g^{\mathrm{nor}}(x) =: \boldsymbol{\mu} >0.
\end{equation*}
Then using Lemma~\ref{LemmaLeeParker} with $k=0$ for a lower estimate, we obtain
\begin{equation} \label{Term3}
\begin{aligned}
  - \int_X \m_g^{\mathrm{nor}} \psi_{\alpha}^2 \dd V_g &\leq - \boldsymbol{\mu} \int_X \psi_{\alpha}^2 \dd V_g \leq -\boldsymbol{\mu} \omega_{n-1} \int_0^\varepsilon u_\alpha(r)^2 r^{n-1} \dd r \\
  &\leq \begin{cases} -\boldsymbol{\mu} \delta \alpha^2 & n\geq 6 \\ -\boldsymbol{\mu} \delta \alpha^2 \log(1/\alpha) & n=4\end{cases}
\end{aligned}
\end{equation}
for some small $\delta>0$. 
Plugging the three individual estimates \eqref{Term1}, \eqref{Term2} and \eqref{Term3}  into \eqref{ToEstimate} and then using the estimate \eqref{EstimateFromBelow} on $\|\psi_\alpha\|_p^2$ (where we remember that $\|u_\alpha\|_p$ is independent of $\alpha$ and thus may be swallowed by the constants) gives in the case $n\geq 8$
\begin{equation*}
\begin{aligned}
\int_X \psi_{\alpha} P_g \psi_{\alpha} \dd V_g &\leq \frac{ b_n \mathcal{Y}(S^n, g_{\mathrm{std}})+ C \alpha^{n-2}+ C \alpha^4 -\boldsymbol{\mu} \delta \|u_1\|_p^{-2} \alpha^2}{(1 - C \alpha^n )^{2/p}}\|\psi_{\alpha}\|_p^2.
\end{aligned}
\end{equation*}
If $n=6$, we get 
\begin{equation*}
\begin{aligned}
\int_X \psi_{\alpha} P_g \psi_{\alpha} \dd V_g &\leq \frac{b_n \mathcal{Y}(S^n, g_{\mathrm{std}})  + C \alpha^{4}+ C \alpha^4\log(1/\alpha) -\boldsymbol{\mu} \delta \|u_1\|_p^{-2} \alpha^2}{(1 - C \alpha^n )^{2/p}}\|\psi_{\alpha}\|_p^2,
\end{aligned}
\end{equation*}
while if $n=4$, the result is
\begin{equation*}
\begin{aligned}
\int_X \psi_{\alpha} P_g \psi_{\alpha} \dd V_g &\leq \frac{b_n \mathcal{Y}(S^n, g_{\mathrm{std}}) + C \alpha^{2}+ C \alpha^2 -\boldsymbol{\mu} \delta \|u_1\|_p^{-2} \alpha^2\log(1/\alpha)}{(1 - C \alpha^n )^{2/p}}\|\psi_{\alpha}\|_p^2,
\end{aligned}
\end{equation*}
In any case,  as $\alpha\rightarrow 0$, the negative term involving $\boldsymbol{\mu}$ decays slower in absolute value than the other $\alpha$-dependent terms. Thus for $\alpha$ small enough, the negative term dominates so that we obtain
\begin{equation*}
  \int_X \psi_{\alpha} P_g \psi_{\alpha} \dd V_g < b_n \mathcal{Y}(S^n, g_{\mathrm{std}}) \|\psi_{\alpha}\|_p^2.
\end{equation*}
The theorem follows with a view on \eqref{PropMassDecomposition} and \eqref{MassFunctionalSphere}.
\end{proof}

\bibliography{MassTraceLit}

\end{document}